\pgfplotsset{compat=1.7}
\newlength{\myx} 
\newlength{\myy}
\let\csname equation*\endcsname\relax 
\let\csname endequation*\endcsname\relax 
\let\csname leftroot\endcsname\relax 
\let\csname uproot\endcsname\relax 
\let\csname boxed\endcsname\relax
\let\csname dddot\endcsname\relax
\let\csname ddddot\endcsname\relax
\let\csname overset\endcsname\relax 
\let\csname underset\endcsname\relax 
\let\csname sideset\endcsname\relax 
\let\csname subarray\endcsname\relax 
\let\csname endsubarray\endcsname\relax
\let\csname substack\endcsname\relax
\newtheorem{theorem}{Theorem}[section]
\newtheorem{lemma}[theorem]{Lemma}
\theoremstyle{definition}
\newtheorem{remark}{Remark}
\newtheorem{example}{Example}[section]
\mathchardef\ordinarycolon\mathcode`\:
\def\R{\mathbb{R}}
\newsavebox\myboxA
\newsavebox\myboxB
\newlength\mylenA
\newcommand*\xoverline[2][0.75]{%
    \sbox{\myboxA}{$\m@th#2$}%
    \setbox\myboxB\null
    \ht\myboxB=\ht\myboxA%
    \dp\myboxB=\dp\myboxA%
    \wd\myboxB=#1\wd\myboxA
    \sbox\myboxB{$\m@th\overline{\copy\myboxB}$}
    \setlength\mylenA{\the\wd\myboxA}
    \addtolength\mylenA{-\the\wd\myboxB}%
    \ifdim\wd\myboxB<\wd\myboxA%
       \rlap{\hskip 0.5\mylenA\usebox\myboxB}{\usebox\myboxA}%
    \else
        \hskip -0.5\mylenA\rlap{\usebox\myboxA}{\hskip 0.5\mylenA\usebox\myboxB}%
    \fi}
\renewcommand{\R}{\mathbb{R}}
\renewcommand{\d}{\,\mathrm{d}}						  
\DeclareMathOperator*{\supp}{\mathrm{supp}}           
\DeclareMathOperator*{\diag}{\mathrm{diag}}
\renewcommand{\epsilon}{\varepsilon}
\renewcommand{\rho}{\varrho}
\newcommand{\be}{\begin{equation}}
\newcommand{\ee}{\end{equation}}
\newcommand{\bea}{\begin{eqnarray}}
\newcommand{\eea}{\end{eqnarray}}
\newcommand{\bean}{\begin{eqnarray*}}
\newcommand{\eean}{\end{eqnarray*}}
\newcommand{\bel}[1]{\begin{equation}\label{#1}}
\newcommand{\eel}[1]{{\label{#1}\end{equation}}}
\newcommand{\vect}[1]{\boldsymbol{#1}}
\newcommand{\Nn}{\mathbb{N}}
\newcommand{\Hh}{\mathcal{H}}
\newcommand{\Pp}{\mathcal{P}}
\newcommand{\Rr}{\mathbb{R}}
\newcommand{\vph}{\varphi}
\newcommand{\rrr}{\vect{r}}
\newcommand{\Mm}{\xoverline[0.6]{\mathrm{m}}}
\newcommand{\Mmm}{\xoverline[0.6]{\vect{M}}}
\begin{document}

\title[3D MPI model using realistic magnetic field topologies ]{A new 3D model for magnetic particle imaging using realistic magnetic field topologies for algebraic reconstruction}

\author{ Ga\"el Bringout$^{1,2}$,  Wolfgang Erb$^3$, J\"urgen Frikel$^4$}
\address{
$^1$ Universit{\"a}t zu L\"ubeck, Ratzeburger Allee 160, 23562 L{\"u}beck, Germany \\
$^2$ Physikalisch-Technische Bundesanstalt (PTB), Abbestr. 2-12,
10587 Berlin-Charlottenburg, Germany \\
 $^3$ Universit{\`a} degli Studi di Padova, Via Trieste 63, 35121 Padova, Italy \\
 $^4$ Ostbayerische Technische Hochschule Regensburg, Galgenbergstr. 32, 93053 Regensburg, Germany \\
}
\ead{gael.bringout@gmail.com, wolfgang.erb@lissajous.it, juergen.frikel@oth-regensburg.de}

\begin{abstract}
We derive a new 3D model for \ac{MPI} that is able to incorporate realistic magnetic fields in the reconstruction process. In real \ac{MPI} scanners, the generated magnetic fields have distortions that lead to deformed magnetic low-field volumes (LFV) with the shapes of ellipsoids or bananas instead of ideal field-free points (FFP) or lines (FFL), respectively. Most of the common model-based reconstruction schemes in MPI use however the idealized assumption of an ideal FFP or FFL topology and, thus, generate artifacts in the reconstruction. Our model-based approach is able to deal with these distortions and can generally be applied to dynamic magnetic fields that are approximately parallel to their velocity field. We show how this new 3D model can be discretized and inverted algebraically in order to recover the magnetic particle concentration. To model and describe the magnetic fields, we use decompositions of the fields in spherical harmonics. We complement the description of the new model with several simulations and experiments. 
\end{abstract}

\vspace{-2mm}

\noindent{\it Keywords}: Magnetic Particle Imaging (MPI), model-based algebraic reconstruction, description of magnetic fields with spherical harmonics, MPI model for realistic magnetic fields, low-field volume, field-free line, field-free point

\vspace{-2mm}



\section{Introduction}
The smart design of magnetic coils for the generation of oscillating magnetic fields is a key challenge in Magnetic Particle Imaging (MPI) \cite{bringout2016MPI,KnoppBuzug2012}. The generated magnetic fields combined with the non-linear magnetization response of the tracer material consisting of \ac{SPIONS} determine the signal acquisition process in MPI and, ultimately, how the distribution of \ac{SPIONS} can be reconstructed. For this reason an accurate description and analysis of realistic magnetic fields is essential to study modelling and reconstruction in MPI.

Since the first publication in 2005 \cite{GleichWeizenecker2005}, Magnetic Particle Imaging has undergone major development steps based on a few major designs for the generation of magnetic fields. For two of these topologies, generally referred to as field-free point (FFP) and field-free line (FFL) topology, fully 3D commercial preclinical MPI scanners are available at the present moment that are able to track \ac{SPIONS} with a high sensitivity and a high temporal resolution. This makes the biomedical imaging modality MPI to a promising tracer-based diagnostic tool, in particulur for blood flow imaging or for quantitative stem cell imaging \cite{Knopp2017,Panagiotopoulos2015,saritas2013magnetic}. 

In the original scanner design for MPI developed at Philips research (introduced in \cite{GleichWeizenecker2005}, extended to a full in vivo 3D design in \cite{Weizenecker2009}), a static gradient field with a space-homogeneous time-varying drive field is combined in order to magnetize the \ac{SPIONS}. The two fields are generated in such a way that a moving spot is created in which the resulting magnetic field is low. The center of this low-field spot in which the magnetic field ideally vanishes is called the \ac{FFP}. As soon as the \ac{FFP} moves over a distribution of \ac{SPIONS}, the magnetization of the \ac{SPIONS} starts to flip, inducing a measurable voltage signal in one or several receive coils. From this time-dependent voltage signal the position of the \ac{SPIONS} can be reconstructed. In the original Philips design, the \ac{FFP} of the created field moves along a 3D Lissajous trajectory inside a rectangular volume. Later on, also other \ac{FFP}-trajectories have been introduced in MPI, see \cite{Knoppetal2009}. 

In \cite{Weizenecker2008}, a second major design principle for magnetic fields was introduced in which the applied magnetic fields ideally vanish along a \ac{FFL}. Compared to the \ac{FFP} setting, the voltage signal is now created in a much larger low field region along the field-free line, providing a higher sensitivity \cite{Weizenecker2008} during the scan. 
A second main advantage of the \ac{FFL} topology is the availability of an efficient model-based reconstruction formula based on the inverse Radon transform \cite{Knopp_etal2011hm}.

For both field topologies, \ac{FFP} and \ac{FFL}, models for the reconstruction of the particle density have been derived. However, only in very idealized settings, as for a 1D-\ac{FFP} along line segments \cite{Erbetal2018,GoodwillConolly2010,Rahemeretal2009} or a non-rotating FFL \cite{bringout2016MPI,Erbe2014,Knopp_etal2011hm}, simple and reliable reconstruction formulas are available. While these simple formulas can be incorporated successfully also in 2D and 3D reconstructions \cite{GoodwillConolly2011, marz2016model,Schomberg2010}, they lead to artifacts once the directions of the magnetic fields are altering quickly, as for instance if the \ac{FFP} is moving on a 3D Lissajous curve. This discrepancy is due to limited possibilities to describe the magnetization behavior of nonuniform anisotropic \ac{SPIONS} correctly if the external magnetic fields are changing rapidly their orientation. In this case, complex
numerical simulations of the Fokker-Planck equations for coupled Brown/N{\'e}el rotations are necessary to describe the imaging properly, see \cite{Kluth2018,Kluth2019,Weizenecker2018}.

Moreover, in practice, real magnetic fields involved in the generation of a \ac{FFP} 
or a \ac{FFL} contain distortions. In particular in the \ac{FFL} setting, the \ac{LFV} of the field has more the appearance of a slightly bended banana than that of a stright line \cite{bringout2016MPI,Erbe2014}. While using the inverse Radon transform for signals created by an ideal \ac{FFL} yields a reasonable recovery of the particle concentration, this is no longer the case for realistic magnetic fields. In this case, the given distortions lead to serious artifacts in the reconstruction, in particular at the boundary of the field of view (as illustrated in the Tables~\ref{tab:reco_normal_sino} and~\ref{tab:reco_0padded_sino} below). A further problem arises from the particular dynamic generation of the \ac{FFL}. In order to accelerate the signal acquisition process, the \ac{FFL} is continuously rotated with a frequency $f_{\mathrm{rot}}$ \cite{KnoppErbe_etal2010}. As the classical filtered backprojection is computed on a rectangular grid in Radon space, the regridding from the rotated Radon information causes additional artifacts in the reconstruction. 

The goal of this article is to introduce and study a new 3D model for MPI that is able to incorporate realistic magnetic fields, and to provide a simple
reconstruction algorithm at the same time. More precisely, for realistic uni-directional time-oscillating magnetic fields we aim at obtaining a model-based reconstruction formula that generalizes the known \ac{FFL} and 1D-\ac{FFP} formulas in MPI. This new model-based reconstruction reduces artifacts generated by distortions and the rotation dynamics of the magnetic fields and allows us to calculate the particle concentration in an efficient way based on an algebraic method. 

To this end, we introduce a family of magnetic fields in which the field is parallel to its own velocity field. For this family of fields, the direction of the field does not change over time, allowing to substitute the general MPI imaging equation with a simpler 3D integral equation that can be discretized in an efficient way.  In the mathematical formulation we use spherical harmonic expansions of magnetic fields (as introduced in \cite{bringout2016MPI,Bringout2014}) and, as important examples, we show that this modelling framework includes classical (ideal) models, like the 1D-\ac{FFP} along a straight line \cite{Erbetal2018,Rahemeretal2009} and the ideal \ac{FFL} \cite{Knopp_etal2011hm}. In particular, we show that this formulation offers enough flexibility to model realistic magnetic fields, e.g. in a \ac{FFL}-type setting, by including higher order harmonics into the expansion. The coefficients of the higher order harmonics can be measured in a calibration procedure providing a realistic MPI model for a particular scanner. In this context, our MPI modelling framework can be interpreted as a hybrid between model-based and measurement-based approach in which the parameters of the magnetic fields are determined in a preliminary step.

\subsection{Contributions} \vspace{-1mm}

\begin{itemize}
 \item[(i)] We introduce a new modelling framework in MPI based on the expansion of magnetic fields in spherical harmonics and homogeneous harmonic polynomials, and we show how ideal and realistic magnetic field topologies in MPI can be modeled within this framework.
 \item[(ii)] We state a new $3D$ MPI model for magnetic fields in which the velocity and the acceleration field are parallel. Applied to ideal cases, this general model explains the standard 1D-FFP and FFL reconstruction formulas.  
 \item[(iii)] We use this new model to obtain a model-based reconstruction that is able to handle realistic magnetic fields in FFL-type imaging. This new model-based approach is able to significantly reduce artifacts in the reconstruction caused by idealized assumptions on the magnetic fields. 
 \item[(iv)] We give a numerical implementation of this reconstruction scheme and provide several simulations and experiments complementing our results.
\end{itemize}

\subsection{Outline of the paper} \vspace{-1mm}
We continue this introductury part by giving a brief overview about the general imaging concepts in MPI (Section~\ref{sec:MPIsignalgeneration}). We further give a mathematical description of important ideal and realistic magnetic field topologies encountered in MPI (Section~\ref{sec:idealmagneticfields}). 
The new model used for the algebraic reconstruction of the particle concentration with realistic field topologies is derived in Section~\ref{sec:newMPImodel}. It is formulated in terms of magnetic fields that are parallel to their velocity field. This familiy of fields contains all relevant ideal and realistic topologies in the considered FFL-type imaging scenario. The numerical details to obtain a discrete system matrix from the given continuous model, including approximation and discretization techniques, are provided in Section~\ref{sec:numerics}.
Finally, the experiments in Section~\ref{sec:experiments} show that the new algebraic reconstruction approach based on a model with realistic magnetic fields is very promising and outperforms a direct reconstruction using a filtered back projection. 
We conclude this article in Section~\ref{sec:conclusion}.

\section{Principles of MPI signal generation} \label{sec:MPIsignalgeneration}

\subsection{General imaging model in MPI}

The basic concept of Magnetic Particle Imaging (MPI) is to recover a density $c(\rrr)$ of \ac{SPIONS} from their non-linear magnetization in
an applied time-varying magnetic field $\vect{B}(\rrr,t)$. 
In an MPI scanner, this change in the magnetization of the superparamagnetic particles is measuered in terms of voltage signals induced in one or several receive coils. Neglecting particle-particle interactions, the corresponding general imaging equation is determined by Faraday's law of induction and is given as (\cite[Eq. (2.36)]{KnoppBuzug2012})
\begin{equation}
\label{eq:faradays law}
	u_{\nu}(t) = -\mu_0 {\frac {\mathrm {d} }{\mathrm {d} t}} \int_{\Omega} \left\langle \vect{\rho}_{\nu}(\rrr), \Mmm(\vect{B}(\rrr,t)) \right\rangle \, c(\rrr) \, \mathrm{d}\rrr.
\end{equation}
Here, $u_{\nu}(t)$ denotes the induced voltage in the receive coil $\nu \in \{ 1, \ldots, V\}$ and $\vect{\rho}_{\nu}(\rrr)$ the sensitivity vector of the receive coil $\nu$ pointing in direction of the central axis of the coil. The function $c(\rrr)$ denotes the particle density at the point $\rrr$ in the domain $\Omega \subset \Rr^3$. Finally, $\Mmm$ describes the magnetization response of a single mean SPION depending on the applied magnetic field $\vect{B}(\rrr,t)$.  

This equation describes a general imaging situation in MPI. For a particular measurement setup the sensitivities $\vect{\rho}_{\nu}$, the magnetization response function $\Mmm$ and the employed magnetic fields $\vect{B}(\rrr,t)$ have to be modelled or specified. 

\subsection{Magnetic fields and magnetization}

The magnetization $\Mmm$ of a single SPION is aligned along the direction
of the applied magnetic field $\vect{B}(\rrr,t)$ and can be written as
\begin{equation} \label{eq:Magnetizationmodel1}
\Mmm(\vect{B}(\rrr,t)) = \Mm(|\vect{B}(\rrr,t)|) \ \frac{\vect{B}(\rrr,t)}{|\vect{B}(\rrr,t)|},
\end{equation}
A classical way to describe the modulus $\Mm$ of the magnetization is the Langevin theory of paramagnetism. In this theory the mean modulus $\Mm$ is modeled as
\begin{equation} \label{eq:Langevinmodel2}
\Mm(|\vect{B}|) = m_{0} L( \lambda |\vect{B}|),
\end{equation}
with the Langevin function $L(x)$ and the constant $\lambda$ given by 
\[L(x) =  \coth x - \frac{1}{x} \quad \text{and} \quad \lambda = \frac{\mu_0 m_0}{k_{\mathrm{B}} T }. \]
Here, $m_0$ denotes the magnetic moment of a single SPION, $\mu_0$ the permeability in free space, $k_{\mathrm{B}}$ the Boltzmann constant and
$T$ the temperature. The Langevin function $L(x)$ is a point symmetric function with respect to the origin and converges for $x \to \pm \infty$ to $\lim_{x\to\pm\infty} L(x) = \pm 1$. Its derivative is given by
\begin{equation} \label{eq:Langevinmodelderivative}
	L'(x) = \begin{cases}
	\displaystyle
		\frac{1}{3}, &x=0,\\[1ex]
		\frac{1}{ x^2}- \frac{1}{\sinh^{2}(x)}, & x\neq 0.
	\end{cases}
\end{equation}
For the modulus $\Mm$, we therefore get $\Mm'(|\vect{B}|) = m_0 \lambda L'(\lambda |\vect{B}|)$ and asymptotically $\lim_{|\vect{B}|\to\infty} \Mm(|\vect{B}|)= m_0$. For a large vector field strength $|\vect{B}|$ the saturation of the magnetization is therefore described by $m_0$. The derivative $\Mm'(0) = m_0 \lambda/3$ is a measure for the magnetic susceptibility of a particle. 

\subsection{Extension of curl- and divergence-free magnetic fields}

In a volume with no magnetic field source and constant permeability $\mu_0$, as for instance in the interior 
of a cylindrical coil, a magnetic field $\vect{B}$ can be regarded both as a divergence-free and a curl-free vector field (see \cite[Section 2.1.2]{bringout2016MPI} or \cite[Section 5.4]{jackson1999}), i.e. it satisfies the two equations
\[ \nabla \cdot \vect{B} = 0 \quad \text{and} \quad \nabla \times \vect{B} = \vect{0}. \]
The second identity implies that $\vect{B}$ is locally a conservative vector field and can be written as the gradient $\vect{B} = \nabla \varphi_{\vect{B}}$
of a potential function $\varphi_{\vect{B}}$. The fact that $\vect{B}$ is divergence-free then implies that $\varphi_{\vect{B}}$ satisfies the Laplace equation 
$\Delta \varphi_{\vect{B}} = 0$. In particular, assuming that the vector field $\vect{B}$ is sufficiently smooth, the identity $\Delta \varphi_{\vect{B}} = 0$ implies that also every component $B_j$ of the vector field $\vect{B} = (B_1, B_2, B_3)$ is a solution of the Laplace equation $\Delta B_j = 0$, $j \in \{1,2,3\}$. As proposed in \cite{Bringout2014}, this fact enables us to extend the components $B_j$ of the vector field $\vect{B}$ in terms of spherical harmonics in order to get a compact description of the magnetic fields
in MPI.  

\subsubsection{Homogeneous harmonic polynomials.}
A homogeneous polynomial in $\Rr^3$ of degree $l \in \Nn_0$ is a linear combination of the monomials
\[ x^{i_1} y^{i_2} z^{i_3}, \quad i_1 + i_2 + i_3 = l.\]
The space $\Pp_l$ of all homogeneous polynomials of degree $l$ has the dimension
\[ \dim \Pp_l = \frac{(l+1)(l+2)}{2}.\]
Herein, the subspace $\Hh_l$ of homogeneous harmonic polynomials of degree $l$ is given by the polynomials $p \in \Pp_l$ satisfying the Laplace equation
\[ \Delta p (x,y,z) = 0. \]
In this way, the spaces $\Hh_l$ are natural candidates to approximate and expand the components $B_i$ of the magnetic field $\vect{B}$. The dimension of the harmonic spaces $\Hh_l$ is given by $\dim \Hh_l = \dim \Pp_l - \dim \Pp_{l-2} = 2l+1$.

\subsubsection{Spherical harmonics.}
A homogeneous harmonic polynomial $p \in \Hh_l$ of degree $l \in \Nn_0$ can be written in spherical coordinates $(r,\theta,\vph)$ as a linear combination of particular basis functions defined in terms of spherical harmonics. Namely, with the spherical coordinates given by
\[ ( x ,y, z ) = ( r \sin \theta \cos \vph, r \sin \theta \sin \vph, r \cos \theta ),\]
the polynomials
\[ p_{l,m}(r,\theta,\vph) = r^l Y_{l,m}(\theta,\vph),\quad m = -l, \ldots, l,\]
form a basis for the space $\Hh_l$ of harmonic polynomials. Here, $Y_{l,m}(\theta,\vph)$ denote the $2l+1$ real-valued Schmidt semi-normalized spherical harmonics of degree $l$ given by
\[ Y_{l,m}(\theta,\vph) = \left\{ \begin{array}{ll} \textstyle \sqrt{2\frac{(l-m)!}{(l+m)!}} P_{l}^{m}(\cos \theta) \cos (m \vph) & \text{if $m \in \{1, \ldots, l\}$,} \\ 
P_{l}^{0}(\cos \theta) & \text{if $m = 0$,} \\ \textstyle \sqrt{2\frac{(l-m)!}{(l+m)!}}
 P_{l}^{|m|}(\cos \theta) \sin (|m| \vph) & \text{if $m \in \{-1, -2, \ldots, -l\}$,}\end{array}\right.\]
where $P_{l}^{m}$, $l,m \in \Nn$, $0 \leq m \leq l$, denote the associated Legendre polynomials given by
\[ P_{l}^{m}(x)={\frac {1}{2^{l } l !}}(1-x^{2})^{m/2}\ {\frac {\mathrm{d}^{l +m}}{\mathrm{d}x^{l +m}}}(x^{2}-1)^{l}.\]
Note that we omit the frequently used Condon-Shortly phase $(-1)^m$ in the definition of $P_{l}^{m}$ (see \cite[Section 3.5]{jackson1999}).
In Table \ref{tab:1}, the spherical harmonics and the corresponding harmonic polynomials up to degree $l=2$ are listed. 

\begin{center}
\begin{table} 
\caption{Spherical harmonics and harmonic homogeneous polynomials of degree $l \leq 2$.} \label{tab:1}\centering
\begin{tabular}{lll}\hline
Degree $l$ & $Y_{l,m}(\theta,\vph)$ & $p_{l,m}(\rrr)$ \\ \hline \\[-2mm]

$0$ & $\begin{array}{l} Y_{0,0}(\theta,\vph) = 1 \end{array}$ & $\begin{array}{l} p_{0,0}(\rrr) = 1 \end{array} $ \\[1mm]
$1$ & $\begin{array}{l} Y_{1,1}(\theta,\vph) = \sin\theta \cos \vph\\ Y_{1,0}(\theta,\vph) = \cos \theta  \\ Y_{1,-1}(\theta,\vph) = \sin \theta\sin \vph\end{array}$ & $\begin{array}{l} p_{1,1}(\rrr) = x \\ p_{1,0}(\rrr) = z \\ p_{1,-1}(\rrr) = y \end{array}$\\[6mm]
$2$ & $\begin{array}{l} Y_{2,2}(\theta,\vph) = \frac{\sqrt{3}}{2} \sin^2 \theta \cos 2 \vph\\ Y_{2,1}(\theta,\vph) = \sqrt{3} \sin \theta \cos \theta \cos \vph \\ Y_{2,0}(\theta,\vph) = \frac{1}{2} (3 \cos^2\theta -1) \\ Y_{2,-1}(\theta,\vph) = \sqrt{3} \sin \theta \cos \theta \sin \vph\\ Y_{2,-2}(\theta,\vph) = \frac{\sqrt{3}}{2} \sin^2 \theta \sin 2 \vph \end{array}$ 
& $\begin{array}{l} p_{2,2}(\rrr) = \frac{\sqrt{3}}{2} (x^2 - y^2)\\ p_{2,1}(\rrr) = \sqrt{3} xz \\ p_{2,0}(\rrr) = z^2 - \frac{1}{2} x^2 - \frac{1}{2} y^2 \\ p_{2,-1}(\rrr) = \sqrt{3} yz \\ p_{2,-2}(\rrr) = \sqrt{3} x y \end{array}$
\\ \hline
\end{tabular}
\end{table}
\end{center}

\subsubsection{Expansion of the components $B_j$ in spherical harmonics.} 
If the components $B_j$ of the magnetic field $\vect{B} = (B_1, B_2, B_3)$ satisfy the Laplace equation, we can expand them in terms of homogeneous harmonic polynomials and obtain the decomposition
\[ B_j(x,y,z) = \sum_{l=0}^{\infty} \sum_{m = -l}^l c_{l,m}^{j} p_{l,m}(x,y,z), \quad j \in \{1,2,3\}, \]
or, in spherical coordinates,
\[ B_j(r,\theta,\vph) = \sum_{l=0}^{\infty} \sum_{m = -l}^l c_{l,m}^{j} r^l Y_{l,m}(\theta,\vph), \quad j \in \{1,2,3\}. \]
We assume that the magnetic fields are properly smooth, so that there are no issues with convergence at this place.

\section{Ideal and realistic magnetic fields in MPI} \label{sec:idealmagneticfields}

All major magnetic field topologies in \ac{MPI} can be written compactly in terms of spherical harmonic expansions for a few involved generating fields. In the following, we review the classical (ideal) magnetic field topologies in MPI and provide their representations with respect to spherical harmonics. We also explain how such expansions can be obtained for the realistic magnetic field topologies.

\subsection{Ideal field-free point (FFP) on 3D-Lissajous trajectory.} 

\begin{table}
\caption{Spherical harmonic encoding 
of a 3D-Lissajous FFP} 
\label{tab:2}
\centering
\begin{tabular}{lllll}\hline 
Coil name & $B_1$ & $B_2$ & $B_3$ & Time dependent part \\ \hline \\[-2mm]
Selection & $c_{11}^1 = -g$ & $c_{1-1}^2 = -g$ & $c_{10}^3 = 2g$ & $1$ \\
x-drive & $c_{00}^1 = d_x$ &  &  & $\sin 2\pi f_x t$ \\
y-drive &  & $c_{00}^2 = d_y$ &  & $\sin 2\pi f_y t$ \\
z-drive &  &  & $c_{00}^3 = d_z$ & $\sin 2\pi f_z t$ \\ \hline \\
\end{tabular} 
\end{table}

As a first example we consider the building elements of the magnetic field $\vect{B}(\rrr,t)$ in the original Philips design \cite{Weizenecker2009}. In this field topology a \ac{FFP} along a 3D-Lissajous trajectory inside a cuboid domain is created. The spherical harmonic coefficients of the involved drive and selection fields are summarized in Table \ref{tab:2}. Here, the constant $g$ denotes the gradient strength of the selection field, $d_x$, $d_y$, $d_z$, and
$f_x$, $f_y$, $f_z$ the amplitudes and frequencies of the time-dependent drive field. The entire magnetic field $\vect{B}(\rrr,t)$ to generate the 3D-FFP on the Lissajous curve is then given by
\begin{align}
\vect{B}(\rrr,t) &=  \vect{B}_{\mathrm{Selection}}(\rrr) 
  + \vect{B}_{\mathrm{x-drive}}(\rrr) \; \sin 2 \pi f_x t \notag \\
 & + \vect{B}_{\mathrm{y-drive}}(\rrr) \; \sin 2 \pi f_y t 
  + \vect{B}_{\mathrm{z-drive}}(\rrr) \; \sin 2 \pi f_z t \notag \\
 &= g \begin{pmatrix} -x \\ -y \\ 2z \end{pmatrix} +  \begin{pmatrix} d_x \sin 2 \pi f_x t \\ d_y \sin 2 \pi f_y t \\ d_z \sin 2 \pi f_z t \end{pmatrix}. \label{eq:FFP-Lissajous}
\end{align}
The vector field $\vect{B}(\rrr,t)$ is curl- and divergence free with the potential function
\[ \vph_{\vect{B}}(\rrr,t) = g(z^2- \textstyle \frac{x^2}{2}- \textstyle \frac{y^2}{2}) + d_x x \sin (2 \pi f_x t) 
+ d_y y \sin (2 \pi f_y t) + d_z z \sin (2 \pi f_z t).  \]

The field-free point $\rrr_{\mathrm{FFP}}(t)$ itself is the point in $\Rr^3$ at which the magnetic field $\vect{B}(\rrr,t)$ vanishes, i.e.,  
$\vect{B}(\rrr_{\mathrm{FFP}}(t),t) = \vect{0}$. In this example, we have
\begin{align*}
\rrr_{\mathrm{FFP}}(t)  &= \left( \frac{d_x}{g} \sin 2 \pi f_x t, \frac{d_y}{g} \sin 2 \pi f_y t, -\frac{d_z}{2g} \sin 2 \pi f_z t \right).
\end{align*}
In particular, the FFP moves along a Lissjous trajectory inside the cuboid domain $[-|\frac{d_x}{g}|,|\frac{d_x}{g}|] \times [-|\frac{d_y}{g}|,|\frac{d_y}{g}|]\times [-|\frac{d_z}{2g}|,|\frac{d_z}{2g}|] \subset \Rr^3 $.
For Lissajous FFP topologies, model-based reconstruction approaches in 3D or 2D have limitations due to the complex magnetization behavior of \ac{SPIONS}. The reconstruction of the particle density for Lissajous FFP topologies is therefore usually performed by measuring the system responses in a rather time-consuming calibration procedure \cite{Gruettner_etal2013,Kaethner2016IEEE,Weizenecker2009}.

\subsection{Ideal 1D-FFP along line segments.} \label{sec:FFP1D}
To generate a FFP that moves along a line segment in $\Rr^3$, we can apply the field
\begin{align*}
\vect{B}(\rrr,t)
 &= g \begin{pmatrix} -x \\ -y \\ 2z \end{pmatrix} +  \begin{pmatrix} d_x  \\ d_y \\ d_z \end{pmatrix} \sin 2 \pi f_{\mathrm{d}} t.
\end{align*}
This field can be generated in the same way as the magnetic field \eqref{eq:FFP-Lissajous} for the 3D-FFP by using the same drive-field frequency $f_{\mathrm{d}} = f_{x} = f_{y} = f_z$ in all coordinates. The position of the FFP is then given as
\begin{align*}
\rrr_{\mathrm{FFP}}(t)  &= \vect{v} \sin (2 \pi f_{\mathrm{d}} t), \quad \text{with} \quad \vect{v} = \left(\frac{d_x}{g}, \frac{d_y}{g}, -\frac{d_z}{2g}\right). 
\end{align*}
The FFP is now moving in the 1D line segment 
$ \mathbb{L}_{\vect{v}} = \{ \rrr = s \vect{v} \ | \ s \in [-1,1]\}$. Such a 1D-FFP topology is generally used for 1D-MPI imaging, see \cite{Erbetal2018,GoodwillConolly2010,Rahemeretal2009}.

\subsection{Ideal rotating field-free line (FFL) in the $xy$-plane} \label{sec:rotatingFFL}
\begin{table}
\caption{Spherical harmonic coefficients of a 2D rotating FFL} 
\label{tab:3}
\centering
\begin{tabular}{lllll}\hline
Coil name & $B_1$ & $B_2$ & $B_3$ & Time dependent part \\ \hline \\[-2mm]
Select Maxwell & $c_{11}^1 = -g$ & $c_{1-1}^2 = -g$ & $c_{10}^3 = 2g$ & $1$ \\
Select Quad $0$ & $c_{11}^1 = g$ & $c_{1-1}^2 = -g$ &  & $\cos 2\pi f_{\mathrm{rot}} t$ \\
Select Quad $45$ & $c_{1-1}^1 = g$ & $c_{11}^2 = g$ &  & $\sin 2\pi f_{\mathrm{rot}} t$ \\
x-drive & $c_{00}^1 = d$ &  &  & $\sin 2\pi f_{\mathrm{d}} t \; \sin \pi f_{\mathrm{rot}} t$ \\
y-drive &  & $c_{00}^2 = d$ &  & $-\sin 2\pi f_{\mathrm{d}} t \; \cos \pi f_{\mathrm{rot}} t$ \\ \hline
\end{tabular}
\vspace{-2mm}
\end{table}

A magnetic field topology to generate a dynamically rotating field-free line (FFL) in the $xy$-plane was developed in 
\cite{Erbe2014}. The building elements of this rotating FFL are listed in
Table \ref{tab:3}, see also \cite{bringout2016MPI} for a derivation.

The complete magnetic field $\vect{B}(\rrr,t)$ to generate the rotating FFL is given by
\begin{align}
\vect{B}(\rrr,t) &= \vect{B}_{\mathrm{Maxwell}}(\rrr) + \vect{B}_{\mathrm{Quad0}}(\rrr) \cos (2 \pi f_{\mathrm{rot}} t) 
 + \vect{B}_{\mathrm{Quad45}}(\rrr) \sin (2 \pi f_{\mathrm{rot}} t) \notag \\
 &\quad + \vect{B}_{\mathrm{x-drive}}(\rrr) \sin (2\pi f_{\mathrm{d}} t) \sin (\pi f_{\mathrm{rot}} t)
  - \vect{B}_{\mathrm{y-drive}}(\rrr) \sin (2 \pi f_{\mathrm{d}} t) \cos (\pi f_{\mathrm{rot}} t) \notag \\
  &=  g \begin{pmatrix} -x \\ -y \\ 2z \end{pmatrix} + 
  g \begin{pmatrix} x \\ -y \\ 0 \end{pmatrix} \cos (2\pi f_{\mathrm{rot}} t)    + 
  g \begin{pmatrix} y \\  x \\ 0 \end{pmatrix} \sin (2\pi f_{\mathrm{rot}} t) \notag \\ & \quad + \begin{pmatrix} d \sin (2\pi f_{\mathrm{d}} t) \sin (\pi f_{\mathrm{rot}} t)  \\ -d \sin (2\pi f_{\mathrm{d}} t) \cos (\pi f_{\mathrm{rot}} t)  \\ 0 \end{pmatrix}, \label{eq:FFL-rotation}
\end{align}
where $f_{\mathrm{d}}$ and $f_{\mathrm{rot}}$ denote the drive and rotation frequencies of the FFL. 
The potential function $\vph_{\vect{B}}$ of the conservative vector field $\vect{B}(\rrr,t)$ has the form 
\[ \small \vph_{\vect{B}}(\rrr,t) = g \Big( z^2 - \big( x \sin(\pi f_{\mathrm{rot}} t ) - 
y \cos(\pi f_{\mathrm{rot}} t )\big)^2\Big) + d \big(x \sin(\pi f_{\mathrm{rot}} t ) - 
y \cos(\pi f_{\mathrm{rot}} t ) \big)  \sin(2 \pi f_{\mathrm{d}} t). \]
We denote by $\mathrm{FFL}(t)$ the set of all $\rrr$ at which the field $\vect{B}(\rrr,t)$ vanishes at $t \in \Rr$.

\begin{lemma} \label{lem:1} The field-free line $\mathrm{FFL}(t)$ at a time $t \in \mathbb{R}$ can be parametrized as
\begin{align*}
\mathrm{FFL}(t) &= \left\{ \begin{pmatrix} \cos(\pi f_{\mathrm{rot}} t) \\ \sin(\pi f_{\mathrm{rot}} t) \\ 0 \end{pmatrix} h + \frac{d}{2g} \begin{pmatrix} \sin (\pi f_{\mathrm{rot}} t) \sin (2\pi f_{\mathrm{d}} t) \\ - \cos (\pi f_{\mathrm{rot}} t) \sin (2\pi f_{\mathrm{d}} t) \\ 0 \end{pmatrix} \ | \ h \in \Rr\right\} \\ &=
\left\{ (x,y,0):\; \sin(\pi f_{\mathrm{rot}} t) x - \cos(\pi f_{\mathrm{rot}} t) y = \frac{d}{2g} \sin (2\pi f_{\mathrm{d}} t ) \right\}.
\end{align*}
The set $\mathrm{FFL}(t)$ is a line in the $xy$-plane perpendicular to $(\sin(\pi f_{\mathrm{rot}} t), - \cos (\pi f_{\mathrm{rot}} t),0)$ and with distance $|\frac{d}{2g} \sin (2\pi f_{\mathrm{d}} t )|$ to the origin.
\end{lemma}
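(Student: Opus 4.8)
The plan is to solve the vanishing condition $\vect{B}(\rrr,t)=\vect{0}$ directly from the explicit expression \eqref{eq:FFL-rotation}, treating the three components separately and then reconciling the outcome with the two parametrizations claimed in the statement.

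First I would read off the third component: all rotation and drive contributions vanish in the $z$-slot, so $B_3(\rrr,t)=2gz$, which forces $z=0$. Hence the field-free set lies entirely in the $xy$-plane, and it remains to solve $B_1=B_2=0$ subject to $z=0$. To keep the trigonometry manageable I would abbreviate $\omega:=\pi f_{\mathrm{rot}}t$ and $\delta:=2\pi f_{\mathrm{d}}t$, so that the quadrupole terms carry $\cos 2\omega,\sin 2\omega$ while the drive terms carry $\sin\omega,\cos\omega$ and $\sin\delta$. Collecting the first two components of \eqref{eq:FFL-rotation} gives $B_1=gx(\cos 2\omega-1)+gy\sin 2\omega+d\sin\delta\sin\omega$ and $B_2=-gy(1+\cos 2\omega)+gx\sin 2\omega-d\sin\delta\cos\omega$.

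The key computation is to insert the double-angle identities $\cos 2\omega-1=-2\sin^2\omega$, $1+\cos 2\omega=2\cos^2\omega$ and $\sin 2\omega=2\sin\omega\cos\omega$. After substitution both components factor through one and the same linear expression: writing $K:=-2g(x\sin\omega-y\cos\omega)+d\sin\delta$, a short calculation yields $B_1=K\sin\omega$ and $B_2=-K\cos\omega$. This shared factorization is the crux of the argument. Because $\sin\omega$ and $\cos\omega$ never vanish simultaneously, the system $B_1=B_2=0$ is equivalent to $K=0$, that is, to the single linear equation $\sin\omega\,x-\cos\omega\,y=\frac{d}{2g}\sin\delta$. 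Together with $z=0$ this is precisely the implicit description in the second line of the lemma, and since $(\sin\omega,-\cos\omega)$ is a unit vector it exhibits $\mathrm{FFL}(t)$ as the line in the $xy$-plane with that normal and signed distance $\frac{d}{2g}\sin\delta$ from the origin, matching the stated distance $|\frac{d}{2g}\sin(2\pi f_{\mathrm{d}}t)|$.

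It then remains to match the explicit parametrization in the first line. I would verify two routine facts: the direction vector $(\cos\omega,\sin\omega,0)$ is orthogonal to the normal $(\sin\omega,-\cos\omega,0)$, hence spans the line, and the offset $\frac{d}{2g}\sin\delta\,(\sin\omega,-\cos\omega,0)$ solves the linear equation, which follows from $\sin^2\omega+\cos^2\omega=1$ in one line. Since a line in the plane is determined by one point together with a direction, the two descriptions coincide. I expect the only genuine obstacle to be noticing the common factor $K$; once the double-angle identities are applied the factorization is forced, and everything else is identity-checking.
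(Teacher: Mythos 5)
Your proposal is correct and follows essentially the same route as the paper: set $z=0$ from the third component, apply the double-angle identities so that $B_1$ and $B_2$ factor through the common linear expression $\sin(\pi f_{\mathrm{rot}}t)\,x-\cos(\pi f_{\mathrm{rot}}t)\,y-\frac{d}{2g}\sin(2\pi f_{\mathrm{d}}t)$ (the paper writes this factorization as equation \eqref{eq:FFLparallel}), and conclude from the non-vanishing of $(\sin(\pi f_{\mathrm{rot}}t),\cos(\pi f_{\mathrm{rot}}t))$ that this scalar must be zero. Your explicit check that the offset point and direction vector satisfy, respectively solve, the normal-form equation is just the ``standard linear algebra argument'' the paper leaves implicit.
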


\begin{proof}
Every point $\rrr = (x,y,z)$ in the set $\mathrm{FFL}(t)$ satisfies by definition $\vect{B}(\rrr,t) = \vect{0}$. Therefore, the formula \eqref{eq:FFL-rotation} for $\vect{B}(\rrr,t)$ gives $z = 0$, and for $x$ and $y$ the system of equations
\[g \begin{pmatrix} x (1 - \cos (2\pi f_{\mathrm{rot}} t)) \\ y (1 + \cos (2\pi f_{\mathrm{rot}} t))  \end{pmatrix} - 
  g \begin{pmatrix} y \\  x  \end{pmatrix} \sin (2\pi f_{\mathrm{rot}} t) = \begin{pmatrix} d \sin (2\pi f_{\mathrm{d}} t) \sin (\pi f_{\mathrm{rot}} t)  \\ -d \sin (2\pi f_{\mathrm{d}} t) \cos (\pi f_{\mathrm{rot}} t)  \end{pmatrix}. 
\]
Factoring out the term $\sin (\pi f_{\mathrm{rot}} t)$ in the first line and $\cos (\pi f_{\mathrm{rot}} t)$ in the second, we can simplify this expression as
\begin{equation} \label{eq:FFLparallel} \left( \sin(\pi f_{\mathrm{rot}} t) x - \cos(\pi f_{\mathrm{rot}} t) y - \frac{d}{2g} \sin (2\pi f_{\mathrm{d}} t ) 
\right) \begin{pmatrix} \sin (\pi f_{\mathrm{rot}} t)  \\ \cos (\pi f_{\mathrm{rot}} t)  \end{pmatrix} = \vect{0}. 
\end{equation}
Identity \eqref{eq:FFLparallel} implies that $\sin(\pi f_{\mathrm{rot}} t) x - \cos(\pi f_{\mathrm{rot}} t) y - \frac{d}{2g} \sin (2\pi f_{\mathrm{d}} t ) = 0$ and, thus, the second stated characterization of the FFL. In particular, it implies that all admissible points $(x,y)$ lie on a line in the 
$xy$-plane perpendicular to $(\sin(\pi f_{\mathrm{rot}} t), - \cos (\pi f_{\mathrm{rot}} t),0)$. From this normal form of the FFL the parametrized description of the FFL given in Lemma \ref{lem:1} follows with a standard linear algebra argument. 
\end{proof}

\subsection{Non-rotating FFL in the $xy$ plane} \label{sec:nonrotatingFFL}
We can slightly modify the magnetic fields from the last subsection to generate non-rotating FFL's. For this, it is only necessary to substitute the time-depending rotating angle $2 \pi f_{\mathrm{rot}} t $ in \eqref{eq:FFL-rotation} with a fixed angle $\alpha \in [0,2 \pi]$. The corresponding magnetic field for a non-rotating FFL is given by
\begin{align*}
\vect{B}(\rrr,t) =  2g \begin{pmatrix} (  y \cos \frac{\alpha}{2} - x \sin \frac{\alpha}{2}) \sin \frac{\alpha}{2} \\  (  x \sin \frac{\alpha}{2} - y \cos \frac{\alpha}{2}) \cos \frac{\alpha}{2} \\ z \end{pmatrix}  + \begin{pmatrix} d \sin (2\pi f_{\mathrm{d}} t) \sin \frac{\alpha}{2}  \\ -d \sin (2\pi f_{\mathrm{d}} t) \cos \frac{\alpha}{2}  \\ 0 \end{pmatrix}. \label{eq:FFL-norotation}
\end{align*}

\noindent From Lemma \ref{lem:1} we can derive that $\mathrm{FFL}(t)$ is in this case given as
\begin{align*}
\mathrm{FFL}(t) &= \left\{ \rrr \ | \ \langle \vect{e}_{\alpha}, \rrr \rangle = \frac{d}{2g} \sin (2\pi f_{\mathrm{d}} t ) \right\},
\end{align*}
where $\vect{e}_{\alpha} = (\sin \left( \frac{\alpha}{2} \right), - \cos \left( \frac{\alpha}{2} \right),0)$ denotes the normal vector of the FFL in the $xy$-plane. In particular, the direction of the FFL is now independent of the time $t$.

\subsection{Realistic magnetic fields in magnetic particle imaging} \label{sec:realistidfield}

While the ideal field topologies of the last sections are represented by only a few spherical harmonics, a realistic magnetic field is accurately described by a much larger amount of spherical harmonic coefficients. By incorporating these higher order spherical harmonics in the description of the magnetic fields a refined and scanner-adapted model of the magnetic field topologies is obtained. The magnetic field coefficients in the expansion can be determined by measuring the field at a discrete set of spherical nodes with a subsequent numerical evaluation of the spherical integrals providing the inner product between field and spherical harmonics, see \cite[Sect. 2.1.2.]{bringout2016MPI}. 
In a second step, the expansion coefficients can then be incorporated into the MPI model that will be introduced in the next section. We believe that this calibration procedure is much less time consuming than the one used in an entirely measurement-based approach for MPI, in which a complete system matrix has to be measured \cite{Gruettner_etal2013}. Consequently, our approach can be considered as a hybrid reconstruction method in which parts of the model (in our case the generating magnetic fields) are measured.

An example taken from~\cite{bringout2016MPI} is shown in Fig.~\ref{fig:outerDriveConnectedSHSC}. While we have seen in Section \ref{sec:rotatingFFL} that an ideal $y$-drive field in the generation of a rotating FFL is represented by a single spherical harmonic, a corresponding realistic $y$-drive field contains a large number of higher degree spherical harmonics. These higher degree spherical harmonics have in return an impact on the low field volume of the generated magnetic field. While the ideal magnetic field generates a straight low field volume (see Fig.~\ref{subfig:LFV_Ideal}) a corresponding realistic low field volume has more the curved shape of a banana (Fig.~\ref{subfig:LFV_Real}).

\begin{figure}[h]
	\input{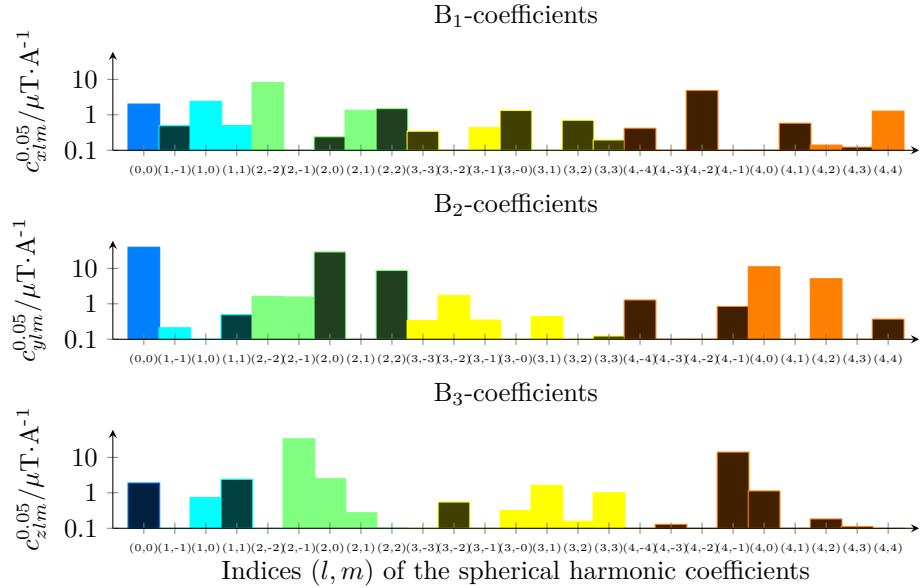}
	\caption[]{Spherical harmonic decomposition of an MPI $y$-drive coil obtained from the numerical model of a connected coil. The spherical harmonic coefficients $c_{l,m}^j$ are displayed up to degree $l=4$ and $-4 \leq m \leq 4$ in a sphere with radius $R=0.05$~m. Darker filling color indicates a negative coefficient.} 
	\label{fig:outerDriveConnectedSHSC}
\end{figure}

\section{A new 3D MPI model for realistic magnetic fields}
\label{sec:newMPImodel}

In this main section, we derive a new model for MPI that incorporates realistic field topologies and allows for a simple reconstruction at the same time. This model is based on the assumption that the applied magnetic field is parallel to its velocity field. This assumption is general enough to guarantee that realistic FFL-type field topologies are included. Further, we will show that the ideal 1D-FFP and FFL reconstruction formulas are special cases of this model.

\subsection{An imaging model for magnetic fields with parallel velocity field}

A time-dependent magnetic field $\vect{B}(\vect{r},t)$ is called \emph{parallel to its velocity field} if
\begin{equation} \label{eq:tangentialacceleration}
\vect{B}(\vect{r},t) \ \left\| \  \frac {\mathrm {d} }{\mathrm {d} t} \vect{B}(\vect{r},t) \right.
\end{equation}
is satisfied for all $\vect{r} \in \Omega$ and times $t$. In other words, at any position $\vect{r} \in \Omega$ and time $t$ the direction of
the velocity $\frac {\mathrm {d} }{\mathrm {d} t} \vect{B}(\vect{r},t)$ is pointing in the same or in the reversed direction of the magnetic field $\vect{B}(\vect{r},t)$. 
If $\vect{B}(\vect{r},t) \neq \vect{0}$ the parallelity \eqref{eq:tangentialacceleration} implies that the magnetic field $\vect{B}(\vect{r},t)$ does not change it's direction over time:
\[  \frac {\mathrm {d} }{\mathrm {d} t} \frac{\vect{B}(\vect{r},t)}{|\vect{B}(\vect{r},t)|}=  \frac{1}{|\vect{B}(\vect{r},t)|} \left( \frac {\mathrm {d} }{\mathrm {d} t} \vect{B}(\vect{r},t) - \frac{\vect{B}(\vect{r},t)}{|\vect{B}(\vect{r},t)|} \left\langle  \frac {\mathrm {d} }{\mathrm {d} t} \vect{B}(\vect{r},t), \frac{\vect{B}(\vect{r},t)}{|\vect{B}(\vect{r},t)|} \right\rangle  \right) = \vect{0}.
\]
In particular, in case of parallelity the acceleration of the magnetic field is also only performed tangentially in direction 
of the field $\vect{B}(\vect{r},t)$. Namely, we have
\[ \frac {\mathrm {d}^2 }{\mathrm {d} t^2} \vect{B}(\vect{r},t) = \frac{\vect{B}(\vect{r},t)}{|\vect{B}(\vect{r},t)|} \ \frac {\mathrm {d}^2 }{\mathrm {d} t^2} |\vect{B}(\vect{r},t)|. 
\]

\begin{theorem} \label{thm:1}
We assume that the function $\Mm: \Rr \to \Rr$ is odd and twice continuously differentiable and that the magnetic field $\vect{B}(\rrr,t)$ is differentiable and parallel to its velocity field. Then the time derivative of the magnetization $\Mmm(\vect{B}(\rrr,t))$ defined in
\eqref{eq:Magnetizationmodel1} can be simplified to
\begin{equation} \label{eq:Magnetizationmodel2}
\frac {\mathrm {d} }{\mathrm {d} t} \Mmm(\vect{B}(\rrr,t)) = \Mm'(|\vect{B}(\rrr,t)|) \, \frac {\mathrm {d} }{\mathrm {d} t} \vect{B}(\vect{r},t).
\end{equation}
In particular, the general MPI imaging model stated in \eqref{eq:faradays law} can be rewritten as
\begin{equation}
\label{eq:newmodel3D}
	u_{\nu}(t) = -\mu_0 \int_{\R^3} \Big\langle \vect{\rho}_{\nu}(\rrr), \frac {\mathrm {d} }{\mathrm {d} t} \vect{B}(\vect{r},t) \Big\rangle \;\Mm'(|\vect{B}(\rrr,t)|) \,  c(\rrr) \, \mathrm{d}\rrr.
\end{equation}
\end{theorem}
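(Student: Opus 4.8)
The plan is to prove the pointwise identity \eqref{eq:Magnetizationmodel2} first, and then deduce \eqref{eq:newmodel3D} from \eqref{eq:faradays law} by interchanging differentiation and integration. For the pointwise identity I would fix $\rrr$ and $t$ with $\vect{B}(\rrr,t)\neq\vect{0}$, abbreviate $\vect{B}=\vect{B}(\rrr,t)$ and $b=|\vect{B}|$, and differentiate the factored form \eqref{eq:Magnetizationmodel1}, namely $\Mmm(\vect{B})=\Mm(b)\,\vect{B}/b$, by the product rule:
\[
\frac{\mathrm d}{\mathrm dt}\Mmm(\vect{B})=\Big(\frac{\mathrm d}{\mathrm dt}\Mm(b)\Big)\frac{\vect{B}}{b}+\Mm(b)\,\frac{\mathrm d}{\mathrm dt}\frac{\vect{B}}{b}.
\]
The second summand vanishes: this is exactly the computation already carried out just before the statement of the theorem, where the parallelity hypothesis \eqref{eq:tangentialacceleration} was shown to force $\frac{\mathrm d}{\mathrm dt}(\vect{B}/b)=\vect{0}$. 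For the first summand I would use $\frac{\mathrm d}{\mathrm dt}\Mm(b)=\Mm'(b)\,\dot b$ together with $\dot b=\frac{\mathrm d}{\mathrm dt}|\vect{B}|=\langle \vect{B}/b,\frac{\mathrm d}{\mathrm dt}\vect{B}\rangle$, which follows from $b=\sqrt{\langle\vect{B},\vect{B}\rangle}$.

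The key remaining step, and the second place where parallelity enters, is the observation that $\dot b\,(\vect{B}/b)=\frac{\mathrm d}{\mathrm dt}\vect{B}$. Indeed, since $\frac{\mathrm d}{\mathrm dt}\vect{B}$ is parallel to $\vect{B}$, it coincides with its own orthogonal projection onto the unit direction $\vect{B}/b$, that is $\frac{\mathrm d}{\mathrm dt}\vect{B}=\langle \vect{B}/b,\frac{\mathrm d}{\mathrm dt}\vect{B}\rangle\,(\vect{B}/b)=\dot b\,(\vect{B}/b)$. Substituting this into the surviving term gives $\frac{\mathrm d}{\mathrm dt}\Mmm(\vect{B})=\Mm'(b)\,\frac{\mathrm d}{\mathrm dt}\vect{B}$, which is precisely \eqref{eq:Magnetizationmodel2} wherever $\vect{B}\neq\vect{0}$.

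The step I expect to require the most care is the degenerate set on which $\vect{B}(\rrr,t)=\vect{0}$, since there the unit vector $\vect{B}/b$ is undefined. Here the oddness of $\Mm$ is essential: it yields $\Mm(0)=0$, so that $\phi(b):=\Mm(b)/b$ extends continuously to $b=0$ with $\phi(0)=\Mm'(0)$ and $\Mmm(\vect{B})=\phi(|\vect{B}|)\,\vect{B}$. At a time $t_0$ with $\vect{B}(\rrr,t_0)=\vect{0}$ one has $\Mmm(\vect{B}(\rrr,t_0))=\vect{0}$, and the difference quotient
\[
\frac{\Mmm(\vect{B}(\rrr,t_0+h))}{h}=\phi\big(|\vect{B}(\rrr,t_0+h)|\big)\,\frac{\vect{B}(\rrr,t_0+h)-\vect{B}(\rrr,t_0)}{h}
\]
converges as $h\to 0$ to $\Mm'(0)\,\frac{\mathrm d}{\mathrm dt}\vect{B}(\rrr,t_0)$, because $\phi$ is continuous at $0$; hence \eqref{eq:Magnetizationmodel2} holds there as well (and in any case this set is typically negligible for the spatial integral). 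Finally, to pass from \eqref{eq:faradays law} to \eqref{eq:newmodel3D} I would interchange $\frac{\mathrm d}{\mathrm dt}$ with $\int_{\R^3}$, justified by a standard dominated-convergence argument using the smoothness of $\vect{B}$, the compact support of $c$, and the continuity of $\Mm'$ (for which twice continuous differentiability is comfortably more than enough), and then pull the scalar $\Mm'(|\vect{B}|)$ out of the inner product and apply \eqref{eq:Magnetizationmodel2} pointwise.
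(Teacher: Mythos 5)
Your proposal is correct and follows essentially the same route as the paper's proof: a product-rule computation for $\vect{B}\neq\vect{0}$ in which parallelity kills the term $\frac{\mathrm d}{\mathrm dt}(\vect{B}/|\vect{B}|)$ and turns $\langle \vect{B}/|\vect{B}|,\frac{\mathrm d}{\mathrm dt}\vect{B}\rangle\,\vect{B}/|\vect{B}|$ into $\frac{\mathrm d}{\mathrm dt}\vect{B}$, together with a separate treatment of the zero set via the function $\Mm(x)/x$ and the oddness of $\Mm$. Your handling of the degenerate case by a direct difference quotient is in fact slightly cleaner than the paper's (which invokes the vanishing of the derivative of $\Mm(x)/x$ at $0$, i.e.\ $\Mm''(0)=0$, where you only need continuity of $\Mm(x)/x$ at $0$), but this is a minor variation within the same argument.
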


\begin{proof}
In the case that $\vect{B}(\rrr,t) \neq \vect{0}$, we use the chain rule to calculate the derivative of the vector field $\Mmm(\vect{B}(\rrr,t))$
given in \eqref{eq:Magnetizationmodel1}:
\begin{align*} \small
\frac {\mathrm {d} }{\mathrm {d} t} \Mmm(\vect{B}(\rrr,t))
=  \Mm'(|\vect{B}(\rrr,t)|) \left\langle  \frac {\mathrm {d} }{\mathrm {d} t} \vect{B}(\vect{r},t), \frac{\vect{B}(\vect{r},t)}{|\vect{B}(\vect{r},t)|} \right\rangle \frac{\vect{B}(\vect{r},t)}{|\vect{B}(\vect{r},t)|} + \Mm(|\vect{B}(\rrr,t)|) \frac {\mathrm {d} }{\mathrm {d} t} \frac{\vect{B}(\vect{r},t)}{|\vect{B}(\vect{r},t)|}.
\end{align*}
Since $\vect{B}(\rrr,t)$ is parallel to its velocity field $\frac {\mathrm {d} }{\mathrm {d} t} \vect{B}(\vect{r},t)$, we have 
$\frac {\mathrm {d} }{\mathrm {d} t} \frac{\vect{B}(\vect{r},t)}{|\vect{B}(\vect{r},t)|} = \vect{0}$ and $\left\langle  \frac {\mathrm {d} }{\mathrm {d} t} \vect{B}(\vect{r},t), \frac{\vect{B}(\vect{r},t)}{|\vect{B}(\vect{r},t)|} \right\rangle \frac{\vect{B}(\vect{r},t)}{|\vect{B}(\vect{r},t)|} = \frac {\mathrm {d} }{\mathrm {d} t} \vect{B}(\vect{r},t)$, and consequently
\begin{align*}
\frac {\mathrm {d} }{\mathrm {d} t} \Mmm(\vect{B}(\rrr,t))
=  \Mm'(|\vect{B}(\rrr,t)|) \, \frac {\mathrm {d} }{\mathrm {d} t} \vect{B}(\vect{r},t).
\end{align*}
In the case that $\vect{B}(\rrr,t) = \vect{0}$, we take a closer look to the univariate function $\Mm(x)/x$. Since $\Mm$ is odd and twice differentiable, we have
\[\frac{\Mm(x)}{x} \underset{x \to 0}{\longrightarrow} \Mm'(0) \quad \text{and} \quad \frac {\mathrm {d} }{\mathrm {d} x} \frac{\Mm(x)}{x} = 
\frac{\Mm'(x) x - \Mm(x)}{x^2} \underset{x \to 0}{\longrightarrow} \Mm''(0) = 0,
\]
and, thus, that the derivative of the function $\Mm(x)/x$ vanishes at $x = 0$. Therefore, we get also in the case $\vect{B}(\rrr,t) = \vect{0}$:
\begin{align*}
\frac {\mathrm {d} }{\mathrm {d} t} \Mmm(\vect{B}(\rrr,t))
=  \frac {\mathrm {d} }{\mathrm {d} t} \left( \frac{\Mm(|\vect{B}(\rrr,t)|)}{|\vect{B}(\rrr,t)|} \vect{B}(\rrr,t) \right) = \Mm'(0) \frac {\mathrm {d} }{\mathrm {d} t} \vect{B}(\rrr,t).
\end{align*}

\end{proof}

\subsection{The 1D-MPI imaging equation for a FFP along a line segment}
We show that the original 1D-MPI reconstruction formula for a 1D-FFP moving on an interval or a line segment in $\Rr^2$ can be deduced from Theorem \ref{thm:1}. In Subsection \ref{sec:FFP1D}, we already showed that  
the magnetic field $
\vect{B}(\rrr,t)
 = g ( -x, -y, 2z ) +  (d_x, d_y, d_z) \sin 2 \pi f_{\mathrm{d}} t
$
leads to the FFP $\rrr_{\mathrm{FFP}}(t) = \vect{v} \sin (2 \pi f_{\mathrm{d}} t)$ oscillating on the line segment $\mathbb{L}_{\vect{v}} = \{ \rrr = s \vect{v} \ | \ s \in [-1,1]\}$ in
direction $\vect{v} = (\frac{d_x}{g}, \frac{d_y}{g}, - \frac{d_z}{2g})$. For all points $\rrr$ in this line segment 
$\mathbb{L}_{\vect{v}}$, the magnetic field $\vect{B}(\rrr,t)$ is parallel to its velocity field $\frac {\mathrm {d} }{\mathrm {d} t} \vect{B}(\vect{r},t)$. Therefore, if the support $\supp c = \Omega \subset \Rr^3$ of the particle concentration is located in a close volume around the line segment $\mathbb{L}_{\vect{v}}$ we can use the simplified imaging 
equation \eqref{eq:newmodel3D} as a model for the MPI signal generation process. Using the gradient matrix $G = \diag(-g,-g,2g)$, we can write equation \eqref{eq:newmodel3D} as
\begin{equation*}
	u_{\nu}(t) = - 2 \pi \mu_0   f_{\mathrm{d}} \cos (2 \pi f_{\mathrm{d}} t) \int_{\Omega} \big\langle \vect{\rho}_{\nu}(\rrr), G \vect{v} \big\rangle \;\Mm'(|G(\rrr - \rrr_{\mathrm{FFP}}(t))|) \,  c(\rrr) \, \mathrm{d}\rrr.
\end{equation*}
Assuming that the coil sensitivity is constant $\vect{\rho}_{\nu}(\rrr) = \vect{\rho}_{\nu}$, this equation simplifies to the 1D-MPI model
\begin{equation}
\label{eq:newmodel3Db}
	u_{\nu}(t) = - 2 \pi \mu_0   f_{\mathrm{d}} \cos (2 \pi f_{\mathrm{d}} t) \big\langle \vect{\rho}_{\nu}, G \vect{v} \big\rangle 
	\Big( c \ast \Mm'(|G \cdot|) \Big) (\rrr_{\mathrm{FFP}}(t)),
\end{equation}
where $c \ast \Mm'(|G \cdot|)$ denotes the convolution of the particle concentration $c$ with the function $\Mm'(|G \cdot|)$. If the Langevin model of magnetization is used, we can further express $\Mm'$ in terms of the derivative \eqref{eq:Langevinmodelderivative} of the Langevin function. 

\begin{remark}
Although the convolution defined in \eqref{eq:newmodel3Db} is defined in terms of a three dimensional integral, the model in \eqref{eq:newmodel3Db} is conceptually a one-dimensional model for the reconstruction of the 
particle concentration $c$ along the line segment $\mathbb{L}_{\vect{v}}$. In a more idealized setting, we can also restrict the particle concentration $c$ to the line $\{\rrr = s \vect{v} \ | \ s \in \Rr\}$ and formulate the model \eqref{eq:newmodel3Db} in terms of a one dimensional convolution along this line. This 1D model was first formulated in \cite{Rahemeretal2009}. In \cite{Erbetal2018}, a profound mathematical analysis of the corresponding imaging operator was conducted. 
\end{remark}

\subsection{The 2D-MPI imaging equation for an ideal FFL}
Also the reconstruction formula for 2D-MPI imaging with an ideal FFL is a special case of Theorem \ref{thm:1}.
In Section \ref{sec:nonrotatingFFL}, the magnetic field 
\begin{align*}
\vect{B}(\rrr,t) =  2g \begin{pmatrix} -\big( \langle \vect{e}_{\alpha}, \rrr \rangle - \frac{d}{2g} \sin (2\pi f_{\mathrm{d}} t) \big) \sin \frac{\alpha}{2}  \\ \phantom{-}\big( \langle \vect{e}_{\alpha}, \rrr \rangle - \frac{d}{2g} \sin (2\pi f_{\mathrm{d}} t) \big) \cos \frac{\alpha}{2}  \\  z \end{pmatrix} \label{eq:FFL-norotation}
\end{align*}
was used to generate the non-rotating
$
\mathrm{FFL}(t) = \{ \rrr \ | \ \langle \vect{e}_{\alpha}, \rrr \rangle = \frac{d}{2g} \sin (2\pi f_{\mathrm{d}} t ) \}$.
Here, $\vect{e}_{\alpha} = (\sin \left( \frac{\alpha}{2} \right), - \cos \left( \frac{\alpha}{2} \right),0)$ denotes the normal vector of the FFL in the $xy$-plane.
The particular definition of $\vect{B}(\rrr,t)$ implies that for all points $\rrr$ in the $xy$-plane the magnetic field $\vect{B}(\rrr,t)$ is parallel to its velocity field $\frac {\mathrm {d} }{\mathrm {d} t} \vect{B}(\vect{r},t)$. Thus, if we assume that 
  the support $\supp c = \Omega \subset \Rr^3$ is a compact 2D region in the $xy$ plane, we can use the simplified imaging equation \eqref{eq:newmodel3D} for a model-based reconstruction. Inserting the magnetic field of the non-rotating FFL in the model equation \eqref{eq:newmodel3D} and using a simplified 2D integral over the domain $\Omega$ in the $xy$-plane, we get
  the formula
\begin{align*}
	u_{\nu}(t) &= - 2 \pi d \mu_0   f_{\mathrm{d}} \cos (2 \pi f_{\mathrm{d}} t) \! \int_{\Omega} \! \big\langle \vect{\rho}_{\nu}(\rrr), \vect{e}_{\alpha} \big\rangle \;\Mm'\left(2g \left| \langle \vect{e}_{\alpha}, \rrr \rangle - \textstyle \frac{d}{2g} \sin (2\pi f_{\mathrm{d}} t)\right| \right)  c(\rrr) \, \mathrm{d}\rrr.
\end{align*}
The vector $\vect{e}_{\alpha}^{\perp} = (\cos \frac{\alpha}{2}, \sin \frac{\alpha}{2},0)$ is perpendicular to $\vect{e}_{\alpha}$ in 
the $xy$ plane. With the basis vectors $\vect{e}_{\alpha}$ and $\vect{e}_{\alpha}^{\perp}$, we can write every point $\rrr$ in the $xy$-plane as $\rrr = s \vect{e}_{\alpha} + w \vect{e}_{\alpha}^{\perp}$. Using Fubini's theorem, we rewrite the bivariate integral above 
as the iterated integral
\begin{align*}
	u_{\nu}(t) &= - 2 \pi d \mu_0   f_{\mathrm{d}} \cos (2 \pi f_{\mathrm{d}} t) \!\! \int_{\Omega} \!\! \big\langle \vect{\rho}_{\nu}(s,w), \vect{e}_{\alpha} \big\rangle \, \Mm' \!\! \left(2g \left| s - \textstyle \frac{d}{2g} \sin (2\pi f_{\mathrm{d}} t)\right| \right)\! c(s,w) \d s \d w.
\end{align*}
Assuming that the coil sensitivity is constant $\vect{\rho}_{\nu}(\rrr) = \vect{\rho}_{\nu}$, this equation simplifies to 
\begin{align*}
	u_{\nu}(t) &= - 2 \pi d \mu_0   f_{\mathrm{d}} \cos (2 \pi f_{\mathrm{d}} t) \big\langle \vect{\rho}_{\nu}, \vect{e}_{\alpha} \big\rangle\int_{\Rr} \Mm'\left(2g \left| s - \textstyle \frac{d}{2g} \sin (2\pi f_{\mathrm{d}} t)\right| \right) \!\int_{\Rr} \! c(s,w) \d w \d s \\ &=
	- 2 \pi d \mu_0   f_{\mathrm{d}} \cos (2 \pi f_{\mathrm{d}} t) \big\langle \vect{\rho}_{\nu}, \vect{e}_{\alpha} \big\rangle\int_{\Rr} \Mm'\left(2g \left| s - \textstyle \frac{d}{2g} \sin (2\pi f_{\mathrm{d}} t)\right| \right) \mathcal{R}c (\vect{e}_{\alpha},s) \d s ,
\end{align*}
where $\mathcal{R}c (\vect{e}_{\alpha},s)$ denotes the Radon transform
of $c$ for the line given by $\langle \rrr, \vect{e}_{\alpha} \rangle = s$. For an ideal non-rotating \ac{FFL} we therefore get the imaging model
\begin{equation}
\label{eq:modelFFL}
	u_{\nu}(t) =
	- 2 \pi d \mu_0   f_{\mathrm{d}} \cos (2 \pi f_{\mathrm{d}} t) \big\langle \vect{\rho}_{\nu}, \vect{e}_{\alpha} \big\rangle  \Big(\Mm'( | 2g \cdot |) \ast \mathcal{R}c (\vect{e}_{\alpha},\cdot)\Big)(\textstyle \frac{d}{2g} \sin \big(2\pi f_{\mathrm{d}} t)\big),
\end{equation}
where $\ast$ denotes the standard one-dimensional convolution between
the kernel function $\Mm'(|2g s|)$ and the Radon transform $\mathcal{R}c (\vect{e}_{\alpha},s)$. 

\begin{remark} \label{rem:FBPreco}
Formula \eqref{eq:modelFFL} provides a direct way to reconstruct the 
particle concentration $c$ from the voltage signal $u_{\nu}$ \cite{Bente2014a,bringout2016MPI,Erbe2014,Knopp_etal2011hm}. Dividing the 
voltage signal $u_{\nu}$ by the velocity and sensitivity factor $- 2 \pi d \mu_0   f_{\mathrm{d}} \cos (2 \pi f_{\mathrm{d}} t) \big\langle \vect{\rho}_{\nu}, \vect{e}_{\alpha} \big\rangle$ and regridding the so obtained time signal onto the interval $[-\frac{d}{2g},\frac{d}{2g}]$, we get an expression for 
$(\Mm'(|2g \cdot|) \ast \mathcal{R}c (\vect{e}_{\alpha},\cdot))(s)$. Deconvolution then yields the Radon transform $\mathcal{R}c (\vect{e}_{\alpha},s)$ of the concentration $c$. By applying the filtered back projection (FBP) to the Radon data $\mathcal{R}c (\vect{e}_{\alpha},s)$ we finally can reconstruct $c$. Note that in some works, the deconvolution step is omitted in the reconstruction, see \cite{konkle2013twenty}.
\end{remark}

\begin{remark}For a rotating \ac{FFL} as given in Section \ref{sec:rotatingFFL}, the imaging equation \eqref{eq:newmodel3D} leads to the same formula
\eqref{eq:modelFFL}, with the only difference that the fixed angle $\alpha$ is replaced with the rotating angle $\alpha(t) = 2 \pi f_{\mathrm{rot}} t$. Note that in this case the parallelity assumption of Theorem \ref{thm:1} is not satisfied. However, if $f_{\mathrm{d}} >> f_{\mathrm{rot}}$, parallelity of the magnetic field $\vect{B}(\rrr,t)$ to its velocity field is almost given and the simplified imaging equation \eqref{eq:newmodel3D} provides a good approximation to the general imaging equation \eqref{eq:faradays law}. \end{remark}

\subsection{An approximative model for realistic magnetic fields}

If the magnetic field $\vect{B}(\rrr,t)$ does not provide an ideal FFP or FFL, we have in general not an analytic inversion formula for the reconstruction of the particle distribution $c$. Nevertheless, we can use the imaging equation \eqref{eq:newmodel3D} to derive a discrete model-based MPI equation in the case that the magnetic fields are parallel to its velocity field. This allows us also for more complex magnetic fields to reconstruct the particle density $c$ algebraically from a modelled system matrix.

As a first step towards a discretization of the integral in \eqref{eq:newmodel3D}, we approximate 
the derivative $\Mm'(x)$ for $|x| < b $ using a piecewise constant function. The function $\Mm'$ is in general localized in a small region around the origin. In this way, $\Mm'(|\vect{B}(\rrr,t)|)$ gets essentially large only in the low-field volume (LFV) of the magnetic field $\vect{B}(\rrr,t)$, i.e., in those regions in which the modulus $|\vect{B}(\rrr,t)|$ is small. The chosen threshold $b>0$ therefore gives a bound for the \ac{LFV} of $\vect{B}(\rrr,t)$ in which $\Mm'$ is large enough to give an impact for the integral equation \eqref{eq:newmodel3D}. 

In the following, we restrict our attention to the approximation of the function $\Mm'$ in the positive interval $[0,b)$. We consider $N$ nodes $0 < x_1 < \cdots < x_N < b$ and set $x_{N+1} = b$, $x_0 = 0$. The approximation of $\Mm'$ with piecewise constant functions is defined on the intervals $I_n = [x_{n},x_{n+1})$, $n = 0, \ldots, N$. We construct
\begin{equation} \label{eq:approxmagnetization}
\Mm_{N}'(x) = \sum_{n=0}^N s_n \chi_{I_n}(|x|) 
\end{equation}
in such a way that $\lim_{N\to \infty} \Mm_{N}'(x) = \Mm'(x)$ for all $x \in [0,b)$. Two schemes to obtain the nodes $x_n$ and the values $s_n$, $n = 0, \ldots, N$, are given in Section \ref{sec:numerics} (we will use the Langevin function \eqref{eq:Langevinmodel2} to model $\Mm$). This construction allows us to approximate the time-derivative of the magnetization derived in \eqref{eq:Magnetizationmodel2} with a piecewise constant function:
\begin{align*}
\Mm_N'(|\vect{B}(\rrr,t)|) \, \frac {\mathrm {d} }{\mathrm {d} t} \vect{B}(\vect{r},t) &=  \left\{ 
\begin{array}{ll}  \displaystyle s_{n} \, \frac {\mathrm {d} }{\mathrm {d} t}\vect{B}(\rrr,t) & \text{if}\; |\vect{B}(\rrr,t)| \in I_n, \\[2ex] 
\vect{0} & \text{if}\;|\vect{B}(\rrr,t)| \geq b
\end{array} \right. \\[1ex]
&= \sum_{n=0}^{N} s_{n}\, \chi_{I_n}(|\vect{B}(\rrr,t)|) \, \frac {\mathrm {d} }{\mathrm {d} t} \vect{B}(\rrr,t).
\end{align*}

\noindent Using the approximate derivative $\Mm_N'$ instead of $\Mm'$, the imaging equation given in equation \eqref{eq:newmodel3D} can be written as
\begin{equation}
\label{eq:faradayslawapprox}
	u_{\nu}(t) = -\mu_{0} \sum_{i=0}^{N} s_{n} \int_{F_{n}(t)} \left\langle \vect{\rho}_{\nu}(\rrr), \frac {\mathrm {d} }{\mathrm {d} t} \vect{B}(\rrr,t) \right\rangle\; c(\rrr) d\rrr,
\end{equation}
where
\begin{equation}
	F_{n}(t) = \{\rrr\in \Omega \ | \ |\vect{B}(\rrr,t)| \in I_n\}.
\end{equation}

\noindent Introducing the kernel functions
\[K_{\nu}(\rrr,t) =  -\mu_{0}  \left\langle\vect{\rho}_{\nu}(\rrr), \frac {\mathrm {d} }{\mathrm {d} t} \vect{B}(\rrr,t) \right\rangle,\]
we finally obtain the  integral equation
 \begin{equation} \label{eq:approximateMPImodel}
u_{\nu}(t) =  \sum_{n=0}^{N} s_{n} \int_{F_{n}(t)} K_{\nu}(\rrr,t) c(\rrr) d\rrr.
 \end{equation}
 
\noindent We have the following limiting relation between the approximate model \eqref{eq:approximateMPImodel} and the orignal equation \eqref{eq:newmodel3D}.

\begin{theorem}
Let $\supp c = \Omega \subset \Rr^3$ be a compact set, the function $K_\nu(\rrr,t) c (\rrr)$ be integrable over $\Omega$ and $\Mm$ be twice continuously differentiable. Set $b > 0$ such that 
\[ \max_{t \in [0,T_0]} \max_{\vect{r} \in \Omega} |B(\vect{r},t)| < b.\]
Let $\Mm_N'$ be an approximation of $\Mm'$ given in \eqref{eq:approxmagnetization} such that $\lim_{N\to \infty} \Mm_{N}'(x) = \Mm'(x)$ uniformly on $[0,b)$. Then, for $t \in [0,T_0]$, we have
\[
\lim_{N \to \infty} \sum_{n=0}^{N} s_{n} \int_{F_{n}(t)} K_{\nu}(\rrr,t) c(\rrr) d\rrr = \int_{\Omega} \Mm'(|B(\vect{r},t)|) K_{\nu}(\rrr,t) c(\rrr) d\rrr.\]
\end{theorem}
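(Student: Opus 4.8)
The plan is to recognise the finite sum on the left-hand side as a single integral of the piecewise constant approximant $\Mm_N'$ composed with the field modulus, and then to bound its distance from the target integral by the uniform approximation error of $\Mm_N'$ multiplied by the $L^1$-norm of the integrand $K_\nu c$.

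First I would rewrite the partial sum as an integral. Since $\Mm_N'(x) = \sum_{n=0}^N s_n \chi_{I_n}(|x|)$ and $F_n(t) = \{\rrr \in \Omega \ | \ |\vect{B}(\rrr,t)| \in I_n\}$, the indicator $\chi_{I_n}(|\vect{B}(\rrr,t)|)$ restricts integration over $\Omega$ precisely to $F_n(t)$. As the sum over $n$ is finite it may be interchanged with the integral, yielding
\[ \sum_{n=0}^N s_n \int_{F_n(t)} K_\nu(\rrr,t) c(\rrr) \, \d\rrr = \int_\Omega \Mm_N'(|\vect{B}(\rrr,t)|) \, K_\nu(\rrr,t) \, c(\rrr) \, \d\rrr. \]
Here I would use that the half-open intervals $I_n = [x_n,x_{n+1})$ form a disjoint cover $[0,b) = \bigcup_{n=0}^N I_n$, so that for each $\rrr$ exactly one summand contributes.

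Next I would invoke the choice of threshold. The assumption $\max_{t\in[0,T_0]} \max_{\rrr\in\Omega} |\vect{B}(\rrr,t)| < b$ guarantees that $|\vect{B}(\rrr,t)| \in [0,b)$ for every $\rrr\in\Omega$ and every $t\in[0,T_0]$, so the argument of $\Mm_N'$ always lies in the region where the hypothesis $\Mm_N' \to \Mm'$ holds uniformly. Subtracting the two integrals and estimating gives
\[ \left| \int_\Omega \big( \Mm_N'(|\vect{B}(\rrr,t)|) - \Mm'(|\vect{B}(\rrr,t)|) \big) K_\nu(\rrr,t) c(\rrr) \, \d\rrr \right| \le \sup_{x\in[0,b)} \abs{\Mm_N'(x) - \Mm'(x)} \int_\Omega \abs{K_\nu(\rrr,t) c(\rrr)} \, \d\rrr. \]
By the integrability hypothesis the last integral is finite, and by uniform convergence the supremum tends to $0$ as $N\to\infty$; hence the right-hand side vanishes in the limit, which is the claim.

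The argument is essentially immediate, so I do not expect a genuine obstacle. The only two points requiring care are the rewriting of the finite sum as the integral of $\Mm_N'(|\vect{B}(\rrr,t)|)$, which relies on the disjointness of the intervals $I_n$, and the factoring of the uniform error out of the integral, for which the supplied $L^1$-bound on $K_\nu c$ is exactly what is needed. Twice continuous differentiability of $\Mm$ plays no essential role in this estimate beyond ensuring that $\Mm'$ is the genuine limit of the $\Mm_N'$; it is carried over from the hypotheses of Theorem~\ref{thm:1}.
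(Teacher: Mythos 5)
Your proposal is correct and follows essentially the same route as the paper's proof: rewrite the finite sum as $\int_\Omega \Mm_N'(|\vect{B}(\rrr,t)|)K_\nu(\rrr,t)c(\rrr)\,\mathrm{d}\rrr$ using the disjoint decomposition $\Omega = \bigcup_{n=0}^N F_n(t)$ guaranteed by the choice of $b$, then pass to the limit via uniform convergence. You merely make explicit the standard estimate $\sup_{x\in[0,b)}|\Mm_N'(x)-\Mm'(x)|\int_\Omega|K_\nu c|\,\mathrm{d}\rrr$ that the paper leaves implicit when invoking uniform convergence together with the integrability of $K_\nu c$.
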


\begin{proof}
If $\Mm$ is twice continuously differentiable, Theorem \ref{thm:apprerror} below together with one of the node selection strategies given in Section \ref{sec:selectionofnodes} guarantees the existence of a piecewise uniform approximation $\Mm_N'$ of $\Mm'$ on the interval $[0,b)$. In fact, Theorem \ref{thm:apprerror} shows that both constructions given in Section \ref{sec:piecewiselinear} are adequate.
For $t \in [0,T_0]$, the domain $\Omega$ corresponds to the disjoint union $\bigcup_{n=0}^N F_n(t)$ and the uniform convergence of $\Mm_N'$ yields 
\begin{align*}
\lim_{N \to \infty} \sum_{n=0}^{N} s_{n} \int_{F_{n}(t)} K_{\nu}(\rrr,t) c(\rrr) d\rrr &= \lim_{N \to \infty} \int_{\Omega} 
\sum_{n=0}^{N} s_{n} \chi_{I_n}(|\vect{B}(\vect{\rrr},t)|) K_{\nu}(\rrr,t) c(\rrr) d\rrr
\\ &= \int_{\Omega} \Mm'(|B(\vect{r},t)|) K_{\nu}(\rrr,t) c(\rrr) d\rrr.
\end{align*}
\end{proof}

We will use the approximate imaging equation given in \eqref{eq:approximateMPImodel} to obtain a model-based algebraic reconstruction formula. We can regard \eqref{eq:approximateMPImodel} however also as an approximative MPI imaging model in case that the magnetic field $\vect{B}(\rrr,t)$ is, as in Theorem \ref{thm:1}, parallel to its velocity field.  
\section{Numerical implementation of the new model} \label{sec:numerics}

\subsection{Piecewise linear approximation of the magnetization function $\Mm$} \label{sec:piecewiselinear}

In this section, we shortly provide two ways on how to approximate the magnetization function $\Mm$ with a piecewise linear function $\Mm_N$, and, at the same time, on how to approximate the derivative $\Mm'$ with a piecewise constant function $\Mm_N'$. The approximation $\Mm_N$ on the positive half-axis consists of a polygon with $N+2$ linear polynomials. For this, we consider $N$ nodes $0 < x_1 < \cdots < x_N < b$ and set $x_{N+1} = b$, $x_0 = 0$. The linear polynomials are defined on the intervals $I_n = [x_{n},x_{n+1})$, $n = 0, \ldots, N$, and on 
$I_{N+1} = [x_{N+1}, \infty)$.

\begin{itemize}
 \item[1)] (Secant approximation scheme) For $n \in \{0, \ldots, N\}$, set
\[ s_n = \frac{\Mm(x_{n+1})-\Mm(x_{n})}{x_{n+1}-x_{n}}.\]
 \item[2)] (Tangent approximation scheme) For $n \in \{0, \ldots, N\}$, set
\[ s_n = \left\{ \begin{array}{ll} \Mm'(0) & \text{if $n = 0$}, \\ 
\Mm'(\frac{x_{n+1} + x_n}{2}) & \text{if $n > 0$}.\end{array} \right.\]
\end{itemize}

\noindent For both choices, we get for $x \geq 0$ the following approximants for the magnetization $\Mm$ and its derivative $\Mm'$:
\begin{align*}
\Mm_{N}'(x) = \sum_{n=0}^N s_n \chi_{I_n}(x), \quad \Mm_{N}(x) = \int_{0}^{x} \sum_{n=0}^N s_n \chi_{I_n}(y) dy.
\end{align*}
An illustration of the approximation $\Mm_N$ for the secant scheme is shown in Fig. \ref{fig:langevinapproximation2}. For negative $x$, we expand $\Mm_N$ and $\Mm_N'$ such that $\Mm_N$ is odd and $\Mm'$ is even on $\Rr$, i.e., 
\[ \Mm_{N}'(x) = \Mm_{N}'(-x), \qquad \Mm_{N}(x) = - \Mm_{N}(-x) \qquad \text{for} \; x \in \Rr.\] 
 
\begin{example}
For the tangent scheme, we obtain a simple approximation of the derivative $\Mm'$ already for $N=0$. In this case, using the Langevin model \eqref{eq:Langevinmodel2} to describe $\Mm$ and formula \eqref{eq:Langevinmodelderivative} for the derivative, we get the approximation
\begin{equation}
	\Mm_N'(x) = \begin{cases}
	\displaystyle
		\frac{m_{0}\lambda}{3} , &0 \leq x< b,\\[1ex]
		0, & x \geq b.
	\end{cases}
\end{equation}
Then, if \[ F_0(t) = \{ \rrr \in \R^3: \; |\vect{B}(\rrr,t)| < b\} \]
denotes the low field volume (LFV) at time $t$ in which the modulus of the magnetic field is smaller than $b$, we obtain in \eqref{eq:approximateMPImodel}
the simple integral equation
\begin{equation*}
	u_{\nu}(t) = -\frac{\mu_{0}m_{0}\lambda}{3} \int_{F_0(t)} \left\langle \vect{\rho}_{\nu}(\rrr),  \frac{\mathrm d}{\mathrm{d} t} \vect{B}(\rrr,t) \right\rangle \; c(\rrr)\, d\rrr, 
\end{equation*}
i.e., the voltage signal $u_{\nu}$ is approximately generated by integrating the particle density together with the velocity term $\left\langle \vect{\rho}_{\nu}(\rrr),  \frac{\mathrm d}{\mathrm{d} t} \vect{B}(\rrr,t) \right\rangle$ over the 
LFV $F_0(t)$.
\end{example}

\begin{figure}
\centering
\includegraphics[width=9cm]{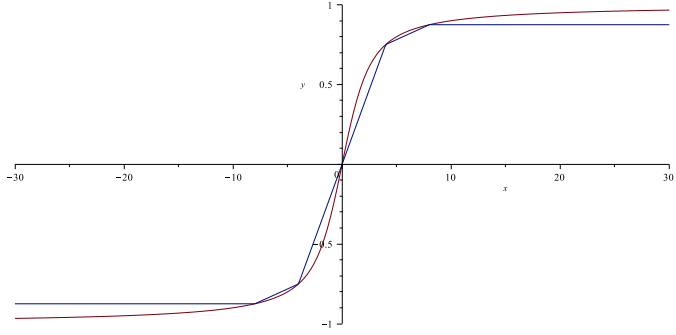}
\caption{Piecewise approximation of the Langevin function $L$ by the secant scheme.}
\label{fig:langevinapproximation2}
\end{figure}

\begin{theorem} \label{thm:apprerror} If $\Mm$ is twice continuously differentiable then, for both, the tangent and the secant approximation scheme, we have the properties:
\begin{align} \sup_{x \in [0,b)} |\Mm'(x) - \Mm_N'(x)| &\leq \sup_{x \in [0,b)}|\Mm''(x)| \max_{k \in \{0, \ldots, N\}} ( x_{k+1} - x_{k}), \label{eq:error1}\\
\sup_{x \in [0,b)} |\Mm(x) - \Mm_N(x)| &\leq \sup_{x \in [0,b)}|\Mm''(x)| \ \sum_{k=0}^N ( x_{k+1} - x_{k})^2. \label{eq:error2}
\end{align}
\end{theorem}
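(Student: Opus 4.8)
The plan is to establish \eqref{eq:error1} first and then deduce \eqref{eq:error2} from it by integration, handling both approximation schemes simultaneously. The unifying observation is that in each scheme the constant value $s_n$ that $\Mm_N'$ takes on the interval $I_n = [x_n, x_{n+1})$ can be written as $s_n = \Mm'(\zeta_n)$ for some point $\zeta_n \in [x_n, x_{n+1}]$. For the tangent scheme this is immediate from the definition, with $\zeta_n$ equal to the midpoint $\frac{x_n + x_{n+1}}{2}$ when $n > 0$ and $\zeta_0 = x_0 = 0$. For the secant scheme it follows from the mean value theorem applied to $\Mm$ on $[x_n, x_{n+1}]$, which produces a point $\zeta_n \in (x_n, x_{n+1})$ with $\Mm'(\zeta_n) = \frac{\Mm(x_{n+1}) - \Mm(x_n)}{x_{n+1} - x_n} = s_n$. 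In this way the secant case is reduced to the same form as the tangent case.

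First I would prove \eqref{eq:error1}. For $x \in I_n$ we have $\Mm'(x) - \Mm_N'(x) = \Mm'(x) - \Mm'(\zeta_n) = \int_{\zeta_n}^{x} \Mm''(t)\,\mathrm{d}t$, hence
\[ |\Mm'(x) - \Mm_N'(x)| \leq |x - \zeta_n|\, \sup_{[0,b)}|\Mm''| \leq (x_{n+1} - x_n)\, \sup_{[0,b)}|\Mm''|. \]
Since the intervals $I_0, \ldots, I_N$ are disjoint with union $[0,b)$, every $x \in [0,b)$ lies in exactly one $I_n$; taking the supremum over $x$ and bounding $(x_{n+1} - x_n)$ by $\max_k (x_{k+1} - x_k)$ gives \eqref{eq:error1}.

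Next I would deduce \eqref{eq:error2}. Because $\Mm$ is odd we have $\Mm(0) = 0$, and $\Mm_N(0) = 0$ by construction, so the fundamental theorem of calculus yields $\Mm(x) - \Mm_N(x) = \int_0^x (\Mm'(y) - \Mm_N'(y))\,\mathrm{d}y$ for every $x \in [0,b)$. For $x \in I_n$ I would split this integral over the complete intervals $I_0, \ldots, I_{n-1}$ together with the partial interval $[x_n, x]$, and bound each piece using the per-interval estimate $\sup_{y \in I_k}|\Mm'(y) - s_k| \leq (x_{k+1} - x_k)\sup_{[0,b)}|\Mm''|$ obtained above. Each complete interval contributes at most $(x_{k+1} - x_k)^2 \sup_{[0,b)}|\Mm''|$, and the final partial piece is bounded by $(x_{n+1} - x_n)^2 \sup_{[0,b)}|\Mm''|$, so summing over $k$ gives $\sum_{k=0}^{N} (x_{k+1} - x_k)^2 \sup_{[0,b)}|\Mm''|$ uniformly in $x$. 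This is exactly \eqref{eq:error2}.

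The computations are elementary, so there is no serious obstacle; the only points requiring attention are the uniform representation $s_n = \Mm'(\zeta_n)$, which collapses the two schemes into a single argument, and the identity $\Mm(0) = 0$, which lets the function error be expressed as a clean integral of the already-controlled derivative error. It is worth noting that the bound in \eqref{eq:error2} is deliberately crude, using a summation $\sum_k (x_{k+1} - x_k)^2$ rather than the sharper maximum that standard piecewise-linear interpolation theory would supply; this looseness is precisely what makes the term-by-term integration suffice and removes any need for a finer interpolation-error analysis.
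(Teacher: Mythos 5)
Your proposal is correct and follows essentially the same route as the paper's proof: the mean value theorem on each interval $I_k$ (applied once to represent $s_k$ as $\Mm'(\zeta_k)$ in the secant case, and once more to $\Mm'$ itself) yields the per-interval bound that gives \eqref{eq:error1}, and \eqref{eq:error2} then follows by writing $\Mm - \Mm_N$ as an integral of the derivative error and summing over the intervals. Your explicit note that $\Mm(0)=0$ (from oddness) and $\Mm_N(0)=0$ justifies the integral representation that the paper uses implicitly, so there is nothing to add.
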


\begin{proof}
For both, the tangent and the secant scheme, the mean value theorem on the interval $I_k$ provides the estimates
\[|\Mm'(x) - \Mm_N'(x)| \leq \sup_{x \in I_k }|\Mm''(x)| (x_{k+1} - x_k). \]
This immediately implies \eqref{eq:error1}. We can use this estimate also to obtain \eqref{eq:error2}:
\begin{align}  |\Mm(x) - \Mm_N(x)| &= \left|\int_0^x (\Mm'(x) - \Mm_N'(x)) \d x \right| \leq \sum_{k=0}^N \int_{I_k} |\Mm'(x) - \Mm_N'(x)| \d x \notag \\ &\leq \sup_{x \in [0,b)}|\Mm''(x)| \ \sum_{k=0}^N ( x_{k+1} - x_{k})^2. \label{eq:error3}
\end{align}
\end{proof}

\subsection{Selection of the nodes} \label{sec:selectionofnodes}

The nodes $x_1, \ldots, x_N$ in the interval $(0,b)$ can be chosen in several different ways. Based on our result in Theorem \ref{thm:apprerror}, we provide two simple options:

\begin{itemize}
 \item[1)] Equidistant points (the simplest choice): 
\[ x_k = \frac{b \, k}{N+1}, \qquad k \in \{0, \ldots, N+1\}.\]
 \item[2)] $L_1$-optimal points: choose $x_1, \ldots, x_N$ in $(0,b)$ such that the $L^1$-norm
 \[ \int_{0}^b |\Mm'(x) - \Mm_N'(x)| \d x\]
 is minimized. 
\end{itemize}

\noindent Theorem \ref{thm:apprerror} ensures that both choices lead to a uniform convergence of $\Mm_N'$ towards $\Mm'$ on the interval $[0,b)$. The $L_1$-optimized variant yields a better approximation quality for the derivative $\Mm'$ as well as for the magnetization function $\Mm$. This can  be seen particularly in the estimate \eqref{eq:error3}, in which an $L_1$- optimal ensemble makes the second inequality redundant. 

In case of the tangent scheme, we give an explicit formula for the $L_1$-norm. We assume that the derivative $\Mm'$ is positive and strictly monotonically decreasing when $x \geq 0$. This is indeed the case if the magnetization $\Mm$ is given as in \eqref{eq:Langevinmodel2} by the Langevin model. Then, we get the explicit formula\begin{align*}
\int_{0}^b |\Mm'(x) - \Mm_N'(x)| \d x &= \sum_{k = 0}^N \int_{I_k} |\Mm'(x) - s_k| \d x \\
&= \sum_{k = 0}^N \left( \int_{x_k}^{\frac{x_k + x_{k+1}}{2}} \Mm'(x) \d x - \int_{\frac{x_k + x_{k+1}}{2}}^{x_{k+1}} \Mm'(x) \d x \right) \\
&= \sum_{k = 0}^N \left( 2 \, \Mm( \textstyle \frac{x_k + x_{k+1}}{2}) - \Mm(x_{k+1}) - \Mm(x_k) \right) = F(x_1, \ldots, x_N).
\end{align*}
Thus, in order to find the ensamble in which the $L^1$-norm gets minimal, we only have to minimize the functional $F$. 

\subsection{Full discretization and model-based imaging matrix}
We assume to have $T$ discrete time measurements $\vect{u}_{\nu}=(u_{\nu}(t_{1}),\dots,u_{\nu}(t_{T}))$ of the voltage signal $u_{\nu}$. To discretize the particle density $c(\rrr)$, we use the representation
\[c(\rrr)=\sum_{k=1}^{K}c_{k}\delta_{k}(\rrr)\]
of $c$ in a given set of basis functions $\delta_{k}$, $k = 1, \ldots, K$.
In our implementation, we use a pixel basis, i.e., $\delta_{k}=\chi_{Q_{k}}$ for rectangular pixel regions $Q_{k}$ centered at the pixel locations $\rrr_{k}$. Therefore, we have $c_{k}=c(\rrr_{k})$.

\noindent The approximate model equation stated in \eqref{eq:faradayslawapprox} can now be discretized as
\[u_{\nu}(t_{j})=\sum_{n=0}^{N} s_{n} \sum_{k=1}^{K} c_{k} \int_{F_{n}(t_{j})} K_{\nu}(\rrr,t_{j}) \delta_{k}(\rrr) d\rrr.\]

\noindent We denote by $S_{\nu,n} \in \Rr^{T \times K}$ the rectangular matrix with the entries 
\[(S_{\nu,n})_{j,k}= \int_{F_{n}(t_{j})} K_{\nu}(\rrr,t_{j}) \delta_{k}(\rrr) d\rrr = \int_{F_{n}(t_{j})\cap Q_{k}} K_{\nu}(\rrr,t_{j}) d\rrr.\]
Then, a model-based discrete imaging equation to recover the particle concentration can be written as
\begin{equation} \label{eq:discretizedmodelequation}
\vect{u}_{\nu} = \left(\sum_{n=0}^{N} s_{n} S_{\nu,n}\right) \vect{c} = S_{\nu} \vect{c},
\end{equation}
where $\vect{c}=(c_{1},\dots,c_{K})$ are the sought concentration values and $S_{\nu} \in \Rr^{T \times K}$ is the modeled system matrix for the receive coil $\nu$.

\subsection{Algebraic reconstruction of the particle concentration}
In order to solve the system of equations \eqref{eq:discretizedmodelequation}
for the particle concentration $\vect{c}$, we combine the information of all $V$ receive coils. In this way, we get the system
\[ \underbrace{\begin{pmatrix} \vect{u}_1 \\ \vdots \\ \vect{u}_V \end{pmatrix}}_{\vect{u}} = \underbrace{\begin{pmatrix} S_1 \\ \vdots \\ S_V \end{pmatrix}}_{S}  \vect{c}.\]
From this system we extract the particle concentration $\vect{c}$ by calculating the solution of the normal equation $S^T S \vect{c} = S^T \vect{u}$ iteratively using the LSQR algorithm together with an early stopping rule. 

\begin{remark}
Although the particle reconstruction in MPI is in general an ill-posed inverse problem \cite{Erbetal2018,KluthJinLi2018,marz2016model}, at that stage, we did not incorporate additional regularization for the solution of the linear system $\vect{u} = S \vect{c}$. The various discretization steps applied in our model and the early stopping of the LSQR procedure already provide a certain regularization of the problem. In combination with our model, one could of course apply also more advanced regularization schemes as for instance described in \cite{storath2017edge}.
\end{remark}
\section{Experiments} \label{sec:experiments}

Based on the phantom presented in Fig.~\ref{subfig:phantom}, two types of magnetic field topologies are used to study the influence on the reconstruction: an ideal rotating FFL magnetic field, represented by a few low-degree spherical harmonics, as described in Section \ref{sec:rotatingFFL}; and a realistic one, obtained from either a realistic numerical model of the magnetic coil or from measurements. A concrete example of such a realistic field is given in Section \ref{sec:realistidfield}.

\begin{figure}[H]
	\subfloat[]{%
	 \includegraphics[width=0.32\textwidth]{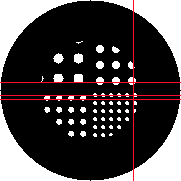}
	 \label{subfig:phantom}} 
	\subfloat[]{%
		\includegraphics[width=0.32\textwidth]{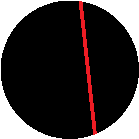}
		\label{subfig:LFV_Ideal}} 
	\subfloat[]{%
		\includegraphics[width=0.32\textwidth]{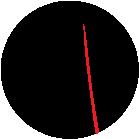}
		\label{subfig:LFV_Real}} 
	 
	\caption[]{
	Ideal and realistic rotating FFL's:
	\subref{subfig:phantom} A phantom composed of circles (in white) of 4, 6, 8 and 10~mm diameter inside a 100~mm diameter circle. The reconstructions along the illustrated horizontal and vertical lines are shown in Fig.~\ref{fig:cut_threshold}, \ref{fig:cut_steps} and \ref{fig:cut_discret}. \subref{subfig:LFV_Ideal} and \subref{subfig:LFV_Real} show the \ac{LFV} (in red) with a field amplitude smaller than 2~mT for an ideal and a realistic field topology, respectively ($f_{\mathrm{rot}}$=100~Hz).}
	\label{fig:FFL_Phantom_Banana_2}
\end{figure}

The influence of the higher degree spherical harmonics on the \ac{LFV} can be observed by comparing Fig.~\ref{subfig:LFV_Ideal} which illustrates the \ac{LFV} at 2~mT for ideal topologies at a given time $t=17.25~\mu$s in our sequence, with Fig.~\ref{subfig:LFV_Real} which shows the corresponding \ac{LFV} for the realistic field topology used afterwards in our simulations. The \ac{LFV} compared to an ideal \ac{FFL} is slightly bended and interrupted on the upper part. To highlight the effect of these differences on the reconstructed particle concentrations, we run now experiments comparing side by side the images obtained either by a \ac{FBP} for ideal and realistic fields or by our algebraic method.
\subsection{General experimental parameters}

For all the presented results, the drive field frequency is fixed at $f_{\mathrm{d}}$=25~kHz and the line rotation frequency $f_{\mathrm{rot}}$ is varied from 100~Hz to 1000~Hz. Due to a RAM limitation of 1~TB in our system, a rotation frequency of 10~Hz has not be conducted. Thus, 250 and 25 projections were used for the 100~Hz and 1000~Hz reconstruction, respectively. It is common to set the drive frequency between <1~kHz to 150~kHz and the rotation frequency from <1~Hz to 100~Hz. The sampling frequency is set to 8~Mhz (4~Mhz for FBP), thus obtaining 160 points per projection (80 for FBP) and emulating the common properties of the acquisition hardware used. Note that in order to do a frequency filtering on the measured signal, the whole rotation was always simulated to obtain a perfectly resolved spectra. Indeed, the hardware of an actual MPI scanner always filters out a frequency range around $f_{\mathrm{d}}$. To reproduce this effect, we always removed all the information below 1.4~$f_{\mathrm{d}}$ from the measured signal. Furthermore, a Gaussian noise was added to all simulations. For the signal simulation, a spatial discretization of $1\times1\times1$~mm$^3$ was used, whereas a discretization of $1.3\times1.3\times1$~mm$^3$ was used for the system matrix. The solution of the proposed discretized MPI model \eqref{eq:discretizedmodelequation} was solved using Matlab's LSQR implementation, which was always stopped after $20$ iterations. This has been optimized on a simulation using a rotation frequency $f_{\mathrm{rot}}$ of $100$~Hz, a threshold of $b = 10$~mT and kept constant for all further tests.

\subsection{Comparison with the filtered back projection}

\begin{figure}[H]
	\subfloat[]{%
		\includegraphics[width=0.48\textwidth,valign=c]{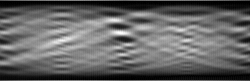}
		\label{subfig:sino_normal}} 
	\subfloat[]{%
		\includegraphics[width=0.48\textwidth,valign=c]{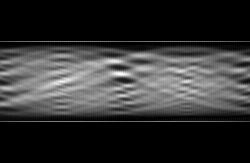}
		\label{subfig:sino_padded}}	 
	\caption[]{Two sinograms of the phantom from Fig.~\ref{subfig:phantom} used as input data for the algorithms. \subref{subfig:sino_normal} Sinogram of a phantom in which the maximal displacement of the Radon transform is adapted to the support of the phantom. It is used to generate the reconstruction shown in Table~\ref{tab:reco_normal_sino}. \subref{subfig:sino_padded} Same sinogram padded with zero, to increase the reconstruction area of the FBP.}
	\label{fig:FFL_FBP}
\end{figure}

We compare our method with the filtered back projection (FBP), which is commonly used for \ac{FFL} systems to perform the image reconstruction~\cite{Bente2014a,konkle2013twenty}. The \ac{FBP} implementation of Matlab (Version 7.11.0) was used to performed the first test. The Radon projections are obtained by the reconstruction steps described in Remark \ref{rem:FBPreco}, using half a period of a drive field sweep with frequency $f_{\mathrm{d}}$ for fixed approximate angles $\alpha \approx 2 \pi f_{\mathrm{rot}} t$ and for discrete $s =\frac{d}{2g} \sin \big(2\pi f_{\mathrm{d}} t)$ in a subinterval of $[-\frac{d}{2g},\frac{d}{2g}]$. The so obtained Radon data is assembled into a sinogram and then reconstructed using the \ac{FBP}.

The sinogram obtained using the ideal model of a rabbit sized FFL MPI scanner~\cite{bringout2016MPI} is presented in Fig. \ref{subfig:sino_normal}. To fully asses the differences between the reconstruction methods over the whole scanner opening, the sinogram is further padded with zeros, as shown in Fig.~\ref{subfig:sino_padded}.

The results of the \ac{FBP} are compared with the images obtained by our method. The system matrix $S_{\nu}$ for the algebraic reconstruction in \eqref{eq:discretizedmodelequation} is constructed using the threshold $b = 10~\mathrm{mT}$ and the secant approximation scheme with $N = 30$ equidistant nodes for the discretization of the Langevin function. The information about the magnetic field $\vect{B}(\rrr,t)$ and its time-derivative is obtained by measurements or simulations. 

Tables~\ref{tab:reco_normal_sino} and~\ref{tab:reco_0padded_sino} highlight the main advantages of our method. Indeed, the presented model-based reconstruction method compensates the main artefacts introduced by the idealized assumptions in the \ac{FBP} reconstruction. Looking at the Table~\ref{tab:reco_normal_sino}, the rotation artifacts which appeared in all images produced by the \ac{FBP} independently of the field complexity and only linked to the continuous rotation of the \ac{LFV} during an acquisition are compensated. The distortion artifacts visible on both Tables, introduced by the complex field topology are also corrected. This can be further observed by the corrected distances between two points of the phantom.

\def\arraystretch{1}
\setlength{\tabcolsep}{1pt}
\begin{table}[H]
	\begin{tabular}{cccc}
		 & Ideal FFL and FBP& Realistic FFL and FBP & Realistic FFL \& our method\\
		\rotatebox[origin=c]{90}{100 Hz} &
		\parbox[c]{0.32\textwidth}{\includegraphics[width=0.32\textwidth]{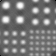}} &
		\parbox[c]{0.32\textwidth}{\includegraphics[width=0.32\textwidth]{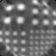}} &
		\vspace{1pt}\parbox[c]{0.32\textwidth}{\includegraphics[width=0.32\textwidth]{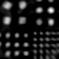}}\vspace{1pt} \\
		\rotatebox[origin=c]{90}{1000 Hz} &
		\parbox[c]{0.32\textwidth}{\includegraphics[width=0.32\textwidth]{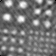}} &
		\parbox[c]{0.32\textwidth}{\includegraphics[width=0.32\textwidth]{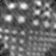}} &
		\vspace{1pt}\parbox[c]{0.32\textwidth}{\includegraphics[width=0.32\textwidth]{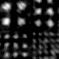}} \\
	\end{tabular}
	\caption{Comparison of reconstructions for different rotating FFL topologies using $f_{\mathrm{rot}} = 100$ Hz (1. row),  $f_{\mathrm{rot}} = 1000$ Hz (2. row), and identically chosen fields of view.}
	\label{tab:reco_normal_sino}
\end{table}

\vspace{-5mm}
\begin{table}[H]
	\begin{tabular}{cccc}
		 & Ideal FFL and FBP& Realistic FFL and FBP & Realistic FFL \& our method\\
		\rotatebox[origin=c]{90}{100 Hz} &
		\parbox[c]{0.32\textwidth}{\includegraphics[width=0.32\textwidth]{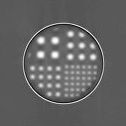}} &
		\parbox[c]{0.32\textwidth}{\includegraphics[width=0.32\textwidth]{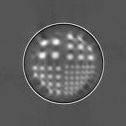}} &
		\vspace{1pt}\parbox[c]{0.32\textwidth}{\includegraphics[width=0.32\textwidth]{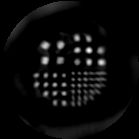}}\vspace{1pt} \\
	\end{tabular}
	\caption{Reconstructions of Table \ref{tab:reco_normal_sino} (first row) using an enlarged field of view. A sinogram padded with zero, as illustrated in Fig.  \ref{subfig:sino_padded}, is used to extend the reconstruction area of the FBP.}
	\label{tab:reco_0padded_sino}
\end{table}

\subsection{Influence of amplitude threshold}

We further study the influence of the threshold $b$ on the reconstruction quality. In Fig.~\ref{fig:SysMat_Thr} we can see how this threshold determines the volume of the \ac{LFV} used in the discrete imaging equation \eqref{eq:discretizedmodelequation} to model the generation of the MPI signal.

\vspace{-6mm}

\begin{figure}[H]
    \centering
	\subfloat[]{%
		\begin{tikzpicture}

\begin{axis}[%
width=0.40\textwidth,
unit vector ratio=1 1 1,
axis on top,
scale only axis,
xmin=0.5,
xmax=139.5,
xtick=\empty,
y dir=reverse,
ymin=0.5,
ymax=139.5,
ytick=\empty,
name=plot2,
colorbar,
colormap/blackwhite,
point meta min=-1,
point meta max=1
]
\addplot [forget plot] graphics [xmin=0.5,xmax=139.5,ymin=0.5,ymax=139.5] {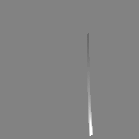};
\end{axis}

\end{tikzpicture}%
	 	\label{subfig:SM_2mT}} 
	\subfloat[]{%
		\begin{tikzpicture}

\begin{axis}[%
width=0.40\textwidth,
unit vector ratio=1 1 1,
axis on top,
scale only axis,
xmin=0.5,
xmax=139.5,
xtick=\empty,
y dir=reverse,
ymin=0.5,
ymax=139.5,
ytick=\empty,
name=plot2,
colorbar,
colormap/blackwhite,
point meta min=-1,
point meta max=1
]
\addplot [forget plot] graphics [xmin=0.5,xmax=139.5,ymin=0.5,ymax=139.5] {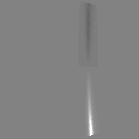};
\end{axis}

\end{tikzpicture}%
		\label{subfig:SM_10mT}} 
	\caption[]{Influence of the threshold $b$ on the information included in the system matrix $S_{\nu}$ for $f_{\mathrm{rot}}$=100~Hz at t=17.25~$\mu$s using the secant approximation scheme with $N = 30$ equidistant nodes. The normalized entries of the system function $S_{\nu}$ are displayed: \subref{subfig:SM_2mT} for a threshold of $b=2$~mT and \subref{subfig:SM_10mT} for a threshold of $b = 10$ mT.}
	\label{fig:SysMat_Thr}
\end{figure}

\vspace{-6mm}

\begin{figure}[H]
	\subfloat[]{%
		\includegraphics[width=0.32\textwidth]{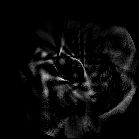}
		\label{subfig:Thresholds_1mT}} 
	\subfloat[]{%
		\includegraphics[width=0.32\textwidth]{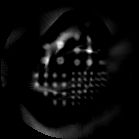}
		\label{subfig:Thresholds_2mT}} 
	\subfloat[]{%
	 	\includegraphics[width=0.32\textwidth]{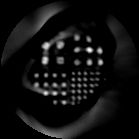}
	 	\label{subfig:Thresholds_3mT}} 
	 
	\subfloat[]{%
		\includegraphics[width=0.32\textwidth]{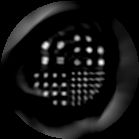}
		\label{subfig:Thresholds_4mT}} 
	\subfloat[]{%
		\includegraphics[width=0.32\textwidth]{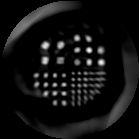}
		\label{subfig:Thresholds_5mT}} 
	\subfloat[]{%
	 	\includegraphics[width=0.32\textwidth]{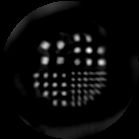}
	 	\label{subfig:Thresholds_10mT}} 
	\caption[]{Reconstruction with thresholds 1, 2, 3, 4, 5 and 10~mT shown in \subref{subfig:Thresholds_1mT} to \subref{subfig:Thresholds_10mT}.}
	\label{fig:FFL_thresholds_variation}
\end{figure}

\begin{figure}[H]
	\input{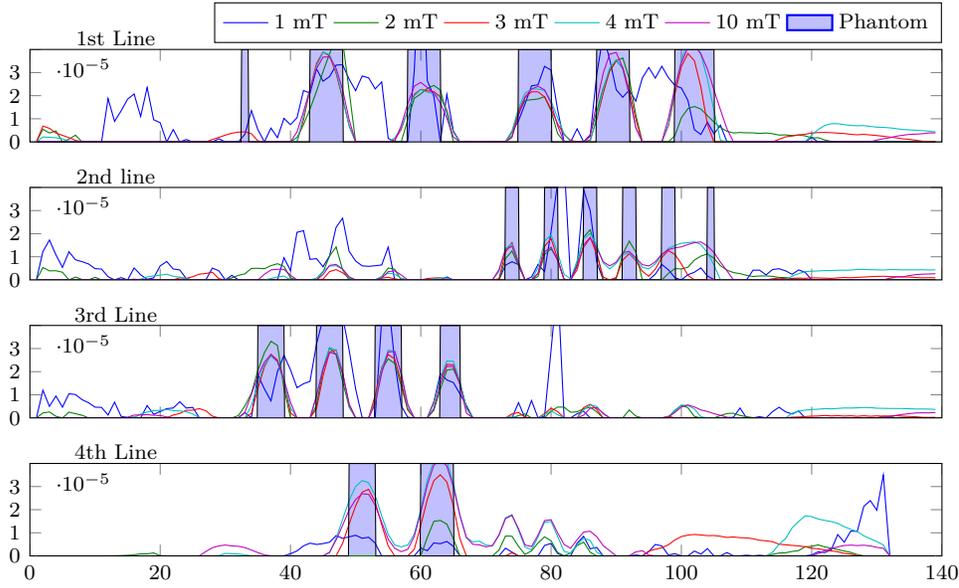}
	\caption[]{Comparison of the line profiles (as shown in Fig.~\ref{subfig:phantom}) for the reconstructions obtained with our method using different thresholds $b$ and the original phantom.}
	\label{fig:cut_threshold}
\end{figure}

\noindent The reconstructions in Fig. \ref{fig:FFL_thresholds_variation} and the line profiles in Fig.~\ref{fig:cut_threshold} show that the choice of the threshold $b$ has a strong impact on the reconstruction quality in the entire \ac{FOV} when $b$ is in the range 1 to 4~mT. On the other hand, if $b$ is between 4 and 10~mT only small differences are visible on the periphery of the \ac{FOV}. Thus, in this example a threshold $b$ of 10~mT is sufficient, and a threshold of 4~mT yields already very good results for the central part of the \ac{FOV}. Note that a smaller threshold is desirable from a computational point of view in order to benefit from a sparser representation of the system matrix $S_{\nu}$.

\subsection{Discretization effects}

In two additional tests, we study discretization effects on the reconstruction. In the first test, we search for the optimal number of equidistant nodes $N$ for the piecewise approximation of the Langevin function in the tangential approximation scheme on an example with $f_{\mathrm{rot}}$=100~Hz and a threshold $b = 10$~mT. The corresponding results are illustrated in Fig. \ref{fig:FFL_steps_variation} and Fig. \ref{fig:cut_steps}. It is visible that already for $N = 8$ the piecewise approximation of the Langevin function provides acceptable reconstructions.  

In a second experiment, we test three different discretization techniques for the Langevin function on an example with $f_{\mathrm{rot}}$=100~Hz, threshold $b = 10$~mT and $N = 30$ nodes. As shown in Figure \ref{fig:FFL_discre_variation} and Fig. \ref{fig:cut_discret}, all three discretizations provide comparable reconstruction results. 

\begin{figure}[H]
	\subfloat[]{%
		\includegraphics[width=0.24\textwidth]{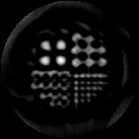}
		\label{subfig:3steps}} 
	\subfloat[]{%
		\includegraphics[width=0.24\textwidth]{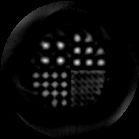}
		\label{subfig:4steps}} 
	\subfloat[]{%
	 	\includegraphics[width=0.24\textwidth]{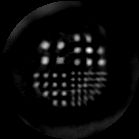}
	 	\label{subfig:8steps}} 
	\subfloat[]{%
		\includegraphics[width=0.24\textwidth]{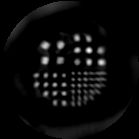}
		\label{subfig:30steps}} 
	\caption[]{Reconstruction with different number $N$ of nodes in the discretization of the Langevin function. The results given here are for 3, 4, 8 and 30 nodes. }
	\label{fig:FFL_steps_variation}
\end{figure}

\vspace{-6mm}

\begin{figure}[H]
	\input{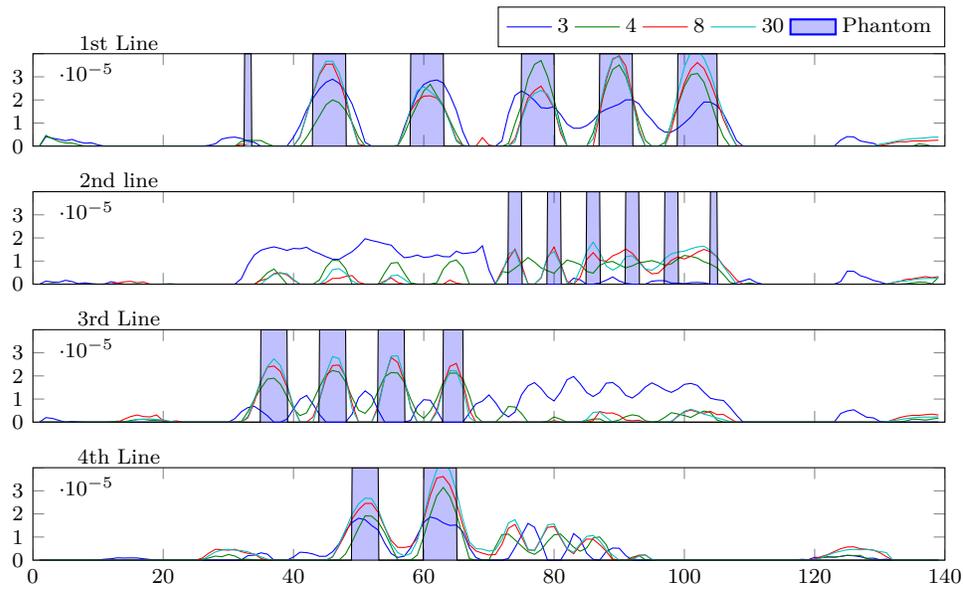}
	\caption[]{Line profiles (as shown in Fig.~\ref{subfig:phantom}) for the reconstructions obtained with our method using different numbers $N$ for the approximation of the Langevin function.}
	\label{fig:cut_steps}
\end{figure}

\vspace{-8mm}

\begin{figure}[H]
	\subfloat[]{%
		\includegraphics[width=0.32\textwidth]{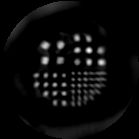}
		\label{subfig:thr1}} 
	\subfloat[]{%
		\includegraphics[width=0.32\textwidth]{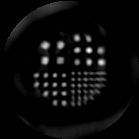}
		\label{subfig:thr2}} 
	\subfloat[]{%
	 	\includegraphics[width=0.32\textwidth]{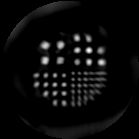}
	 	\label{subfig:thr3}} 
	\caption[]{Reconstruction with three different discretization schemes. \subref{subfig:thr1} Secant, equidistant; \subref{subfig:thr2} Tangent, equidistant; \subref{subfig:thr3} Tangent, $L_1$-optimal. }
	\label{fig:FFL_discre_variation}
\end{figure}

\begin{figure}[H]
	\input{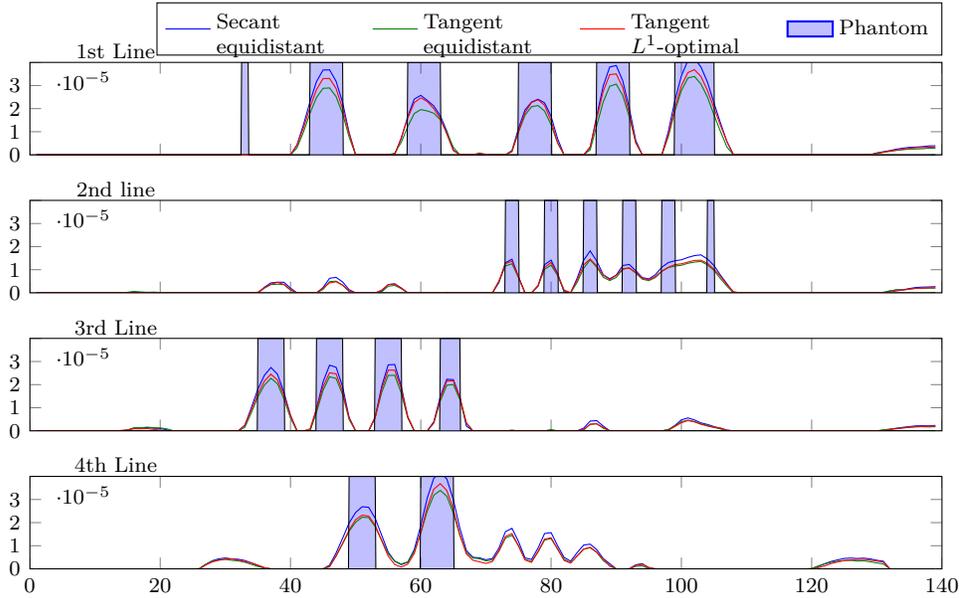}
	\caption[]{Line profiles (as shown in Fig.~\ref{subfig:phantom}) for the reconstructions obtained with our method using the three discretization techniques given in Fig. \ref{fig:FFL_discre_variation}.}
	\label{fig:cut_discret}
	\vspace{-2mm}
\end{figure}

\section{Conclusion} \label{sec:conclusion}

We introduced a new 3D modelling framework for magnetic particle imaging that includes classical MPI models based on ideal 1D-FFP and FFL magnetic fields, as well as realistic magnetic field topologies. Via expansions in spherical harmonics, this framework allows to incorporate realistic magnetic fields in the imaging model such that the reconstruction process can be adapted to a given scanner topology. In this sense, our framework can be regarded as a hybrid model-based approach for MPI in which the applied magnetic fields are measured in a preliminary calibration step and then included in the 3D model. 

Compared to an ideal FFP or FFL topology, the magnetic fields generated in real MPI scanners have distortions that lead to distorted low-field volumes. Our model-based approach is able to deal with these distortions and can generally be applied for magnetic fields that are parallel to their velocity field. 
We showed how this new 3D model can be approximated and discretized numerically in order to obtain a finite system matrix for the reconstruction of the magnetic particles. 

To obtain the final magnetic particle distribution, we inverted the resulting system matrix algebraically using a finite number of LSQR iterations and no further tuning. This was sufficient to evaluate the enhanced reconstruction properties of our proposed model, leaves however room for further improvements. In particular, the incorporation of more advanced regularization techniques is likely to have an additional positive effect on the reconstruction quality of our model.   

\newpage 
\section*{Acknowledgements}
The authors of this article were involved in the activities of the DFG-funded scientific network MathMPI (ER777/1-1) and thank the German Research Foundation for the support.

\section*{References}

\begin{acronym}
	\acro{AS}[AS]{amplitude spectrum}
	\acro{ASD}[ASD]{amplitude spectral density}
	\acro{ART}[ART]{algebraic reconstruction technique}
	\acro{BEM}[BEM]{boundary element method}
	\acro{CS}[CS]{coordinate system}
	\acro{CT}[CT]{computed tomography}
	\acro{DCT}[DCT]{discrete cosine transform}
	\acro{DFT}[DFT]{discrete Fourier transform}
	\acro{FBP}[FBP]{filtered back-projection}
	\acro{FC}[FC]{Fourier coefficient}
	\acro{FEM}[FEM]{finite element method}
	\acro{FFL}[FFL]{field-free line}
	\acro{FFP}[FFP]{field-free point}
	\acro{FOV}[FOV]{field of view}
	\acro{HMIEFA}[HMIEFA]{highest maximal induced electrical field amplitude}
	\acro{ISIC}[ISIC]{idealised scanner made of idealised coils}
	\acro{LFV}[LFV]{low-field volume}
	\acro{LNA}[LNA]{low-noise amplifier}
	\acro{MC}[MC]{multipole coefficient}
	\acro{MIEFA}[MIEFA]{maximal induced electrical field amplitude}
	\acro{MPI}[MPI]{magnetic particle imaging}
	\acro{MRI}[MRI]{magnetic resonance imaging}
	\acro{MSE}[MSE]{multipole series expansion}
	\acro{nHMIEFA}[nHMIEFA]{normalised highest maximal induced electrical field amplitude}
	\acro{nMIEFA}[nMIEFA]{normalised maximal induced electrical field amplitude}
	\acro{PNS}[PNS]{peripheral nerve stimulation}
	\acro{PS}[PS]{power spectrum}
	\acro{PSD}[PSD]{power spectral density}
	\acro{QPQC}[QPQC]{quadratic programming problem with quadratic constrains}
	\acro{RMS}[RMS]{root mean square}
	\acro{SAR}[SAR]{specific absorption rate}
	\acro{SF}[SF]{system function}
	\acro{SHC}[SHC]{spherical harmonic coefficient}
	\acro{SPION}[SPION]{super\-para\-magnetic iron oxide nano\-particle}
    \acro{SPIONS}[SPIONs]{super\-para\-magnetic iron oxide nano\-particles}
	\acro{SNR}[SNR]{signal to noise ratio}
	\acro{SHSE}[SHSE]{spherical harmonic series expansion}
	\acro{THD}[THD]{total harmonic distortion}
	\acro{tSF}[tSF]{truncated system function}

\end{acronym}


\begin{thebibliography}{10}

\bibitem{Bente2014a}
K.~Bente, M.~Weber, M.~Graeeser, T.~F. Sattel, M.~Erbe, and T.~M. Buzug.
\newblock Electronic field free line rotation and relaxation deconvolution in
  magnetic particle imaging.
\newblock {\em IEEE Trans. Med. Imag.}, 34(2):644--651, 2014.

\bibitem{bringout2016MPI}
G.~Bringout.
\newblock {\em Field Free Line Magnetic Particle Imaging: Characterisation and
  imaging device up-scaling}.
\newblock PhD thesis, Universität zu Lübeck, 2016.

\bibitem{Bringout2014}
G.~Bringout and T.~M. Buzug.
\newblock {A robust and compact representation for magnetic fields in magnetic
  particle imaging}.
\newblock {\em Biomedical Engineering}, 59:646--650, 2014.

\bibitem{Erbetal2018}
W.~Erb, A.~Weinmann, M.~Ahlborg, C.~Brandt, G.~Bringout, T.~M. Buzug,
  J.~Frikel, C.~Kaethner, T.~Knopp, T.~M{\"a}rz, M.~M{\"o}ddel, M.~Storath, and
  A.~Weber.
\newblock {Mathematical Analysis of the 1D Model and Reconstruction Schemes for
  Magnetic Particle Imaging}.
\newblock {\em Inverse Problems}, 34:055012, 2018.

\bibitem{Erbe2014}
M.~Erbe.
\newblock {\em Field Free Line Magnetic Particle Imaging}.
\newblock Springer Vieweg, 2014.

\bibitem{GleichWeizenecker2005}
B.~Gleich and J.~Weizenecker.
\newblock {Tomographic imaging using the nonlinear response of magnetic
  particles}.
\newblock {\em Nature}, 435:1214--1217, 2005.

\bibitem{GoodwillConolly2010}
P.~Goodwill and S.~Conolly.
\newblock The {X}-space formulation of the magnetic particle imaging process:
  1-{D} signal, resolution, bandwidth, {SNR}, {SAR}, and magnetostimulation.
\newblock {\em IEEE Trans. Med. Imaging}, 29:1851--1859, 2010.

\bibitem{GoodwillConolly2011}
P.~Goodwill and S.~Conolly.
\newblock Multidimensional {X}-space magnetic particle imaging.
\newblock {\em IEEE Trans. Med. Imaging}, 30:1581--1590, 2011.

\bibitem{Gruettner_etal2013}
M.~Gr{\"u}ttner, T.~Knopp, J.~Franke, M.~Heidenreich, J.~Rahmer, A.~Halkola,
  C.~Kaethner, J.~Borgert, and T.~M. Buzug.
\newblock On the formulation of the image reconstruction problem in magnetic
  particle imaging.
\newblock {\em Biomedical Engineering}, 58(6):583--591, 2013.

\bibitem{jackson1999}
J.~D. Jackson.
\newblock {\em Classical electrodynamics}.
\newblock Wiley, New York, {NY}, 3rd ed. edition, 1999.

\bibitem{Kaethner2016IEEE}
C.~Kaethner, W.~Erb, M.~Ahlborg, P.~Szwargulski, T.~Knopp, and T.~M. Buzug.
\newblock Non-equispaced system matrix acquisition for magnetic particle
  imaging based on {L}issajous node points.
\newblock {\em IEEE Trans. Med. Imag.}, 35(11):2476--2485, 2016.

\bibitem{Kluth2018}
T.~Kluth.
\newblock Mathematical models for magnetic particle imaging.
\newblock {\em Inverse Problems}, 34(8):083001, 2018.

\bibitem{KluthJinLi2018}
T.~Kluth, B.~Jin, and G.~Li.
\newblock On the degree of ill-posedness of multi-dimensional magnetic particle
  imaging.
\newblock {\em Inverse Problems}, 34(9):095006, 2018.

\bibitem{Kluth2019}
T.~Kluth, P.~Szwargulski, and T.~Knopp.
\newblock Towards accurate modeling of the multidimensional magnetic particle
  imaging physics.
\newblock {\em New Journal of Physics}, 21(10):103032, 2019.

\bibitem{Knoppetal2009}
T.~Knopp, S.~Biederer, T.~Sattel, J.~Weizenecker, B.~Gleich, J.~Borgert, and
  T.~M. Buzug.
\newblock Trajectory analysis for magnetic particle imaging.
\newblock {\em Phys. Med. Biol.}, 54:385--397, 2009.

\bibitem{KnoppBuzug2012}
T.~Knopp and T.~M. Buzug.
\newblock {\em Magnetic Particle Imaging: An Introduction to Imaging Principles
  and Scanner Instrumentation}.
\newblock Springer, 2012.

\bibitem{KnoppErbe_etal2010}
T.~Knopp, M.~Erbe, S.~Biederer, T.~F. Sattel, and T.~M. Buzug.
\newblock {Efficient generation of a magnetic field-free line}.
\newblock {\em Medical Physics}, 37:35--38, 2010.

\bibitem{Knopp_etal2011hm}
T.~Knopp, M.~Erbe, T.~Sattel, S.~Biederer, and T.~M. Buzug.
\newblock {A Fourier slice theorem for magnetic particle imaging using a
  field-free line}.
\newblock {\em Inverse Problems}, 27:095004, 2011.

\bibitem{Knopp2017}
T.~Knopp, N.~Gdaniec, and M.~Möddel.
\newblock Magnetic particle imaging: from proof of principle to preclinical
  applications.
\newblock {\em Phys. Med. Biol.}, 64(14):124--178, 2017.

\bibitem{konkle2013twenty}
J.~J. Konkle, P.~W. Goodwill, E.~U. Saritas, B.~Zheng, K.~Lu, and S.~M.
  Conolly.
\newblock Twenty-fold acceleration of 3d projection reconstruction {MPI}.
\newblock {\em Biomedical Engineering}, 58(6):565--576, 2013.

\bibitem{marz2016model}
T.~M{\"a}rz and A.~Weinmann.
\newblock Model-based reconstruction for magnetic particle imaging in 2d and
  3d.
\newblock {\em Inverse Problems and Imaging}, 10(4), 2016.

\bibitem{Panagiotopoulos2015}
N.~Panagiotopoulos, R.~Duschka, M.~Ahlborg, G.~Bringout, C.~Debbeler,
  M.~Graeser, C.~Kaethner, K.~L{\"u}dtke-Buzug, H.~Medimagh, J.~Stelzner, T.~M.
  Buzug, J.~Barkhausen, F.~M. Vogt, and J.~Haegele.
\newblock Magnetic particle imaging - current developments and future
  directions.
\newblock {\em International Journal of Nanomedicine}, 10:3097--3114, 2015.

\bibitem{Rahemeretal2009}
J.~Rahmer, J.~Weizenecker, B.~Gleich, and J.~Borgert.
\newblock Signal encoding in magnetic particle imaging: properties of the
  system function.
\newblock {\em BMC Medical Imaging}, 9:4, 2009.

\bibitem{saritas2013magnetic}
E.~Saritas, P.~Goodwill, L.~Croft, J.~Konkle, K.~Lu, B.~Zheng, and S.~Conolly.
\newblock Magnetic particle imaging ({MPI}) for {NMR} and {MRI} researchers.
\newblock {\em J. of Magnetic Resonance}, 229:116--126, 2013.

\bibitem{Schomberg2010}
H.~Schomberg.
\newblock {Magnetic particle imaging: model and reconstruction}.
\newblock In {\em 2010 IEEE Inter- national Symposium on Biomedical Imaging:
  From Nano to Macro}, pages 992--995, 2010.

\bibitem{storath2017edge}
M.~Storath, C.~Brandt, M.~Hofmann, T.~Knopp, J.~Salamon, A.~Weber, and
  A.~Weinmann.
\newblock Edge preserving and noise reducing reconstruction for magnetic
  particle imaging.
\newblock {\em IEEE Trans. Med. Imag.}, 36(1):74--85, 2017.

\bibitem{Weizenecker2018}
J.~Weizenecker.
\newblock The {F}okker–{P}lanck equation for coupled
  {B}rown–{N}{\'e}el-rotation.
\newblock {\em Phys. Med. Biol.}, 63(3):035004, 2018.

\bibitem{Weizenecker2008}
J.~Weizenecker, B.~Gleich, and J.~Borgert.
\newblock Magnetic particle imaging using a field free line.
\newblock {\em Journal of Physics D: Applied Physics}, 41(10):105009, 2008.

\bibitem{Weizenecker2009}
J.~Weizenecker, B.~Gleich, J.~Rahmer, H.~Dahnke, and J.~Borgert.
\newblock Three-dimensional real-time in vivo magnetic particle imaging.
\newblock {\em Phys. Med. Biol.}, 54(5):L1--L10, 2009.

\end{thebibliography}
\end{document}